\newtheorem{theorem}{Theorem}[section]
\newtheorem{lemma}[theorem]{Lemma}
\newtheorem{corollary}[theorem]{Corollary}
\newtheorem{proposition}[theorem]{Proposition}
\theoremstyle{definition}
\newtheorem{definition}[theorem]{Definition}
\newtheorem{remark}[theorem]{Remark}
\newcommand{\restrict}{\,{\mathbin{\vert\mkern-0.3mu\grave{}}}\,}
\DeclareMathOperator{\McN}{\mathscr M}
\DeclareMathOperator{\McNn}{\mathscr M_{\it n}}
\DeclareMathOperator{\conv}{\rm conv}
\DeclareMathOperator{\den}{\rm den}
\DeclareMathOperator{\Zed}{\mathbb{Z}}
\DeclareMathOperator{\cube}{[0,1]^{\it n}}
\DeclareMathOperator{\maxspec}{\rm MaxSpec}
\title[Rational polyhedra and
projective unital $\ell$-groups]{\bf Rational
polyhedra and projective
lattice-ordered abelian groups with order unit}
\author[L.Cabrer]{Leonardo Cabrer $^\ddag$}
\address[L.Cabrer]{CONICET \\
Dep. de Matem{\'a}ticas -- Facultad de Ciencias Exactas \\
Universidad Nacional del Centro de la Provincia de Buenos Aires \\
Pinto 399 -- Tandil (7000) \\
Argentina }
\email{lcabrer@exa.unicen.edu.ar }
\author[D.Mundici]{Daniele Mundici$^\dag$}
\address[D.Mundici]{Dipartimento di
Matematica \, ``Ulisse Dini'' \\
Universit\`{a} degli Studi di Firenze \\
viale Morgagni 67/A \\
I-50134 Firenze \\
Italy}
\email{mundici@math.unifi.it }
\keywords{Lattice-ordered abelian group, order unit,
projective,
rational polyhedron,
regular fan, desingularization,  blow-up,
weak Oda conjecture,
retract,  contractibility, collapsibility,
  Whitehead theorem.}
\subjclass[2000]{Primary:   06F20, 52B20.
Secondary:    08B30,   14M25,
 20F60,   52B11,   54C15,  54C55, 54D05, 55U10,
 57Q05, 57Q10.}
\date{\today}
\begin{document}

\begin{abstract}
An {\it $\ell$-group} $G$ is an abelian group equipped with
a translation invariant lattice order.
Baker and Be\-y\-n\-on
proved that  $G$
is finitely generated projective iff it is finitely presented.
A {\it unital} $\ell$-group is an
$\ell$-group $G$
with a distinguished {\it order unit}, i.e.,
an element $0\leq u\in G$ whose positive
integer multiples eventually dominate every
element of $G$. While every finitely generated
projective unital $\ell$-group is finitely
presented, the converse does not hold in general.
Classical algebraic topology (\'a la Whitehead)
will be combined in this paper with the
W\l odarczyk-Morelli solution of the weak Oda
conjecture for toric varieties, to describe
finitely generated projective unital $\ell$-groups.
\end{abstract}

\maketitle

\section{Introduction: unital $\ell$-groups and rational polyhedra}
A  main reason of interest in  unital $\ell$-groups
is that Elliott classification yields a one-one
correspondence  $\kappa$
between
isomorphism classes of
 unital
AF $C^{*}$-algebras whose Murray-von Neumann
order of projections is a lattice, and
isomorphism classes of
countable unital $\ell$-groups:  $\kappa$  is
an order-theoretic enrichment of
Gro\-then\-dieck  $K_0$ functor,
\cite[3.9,3.12]{mun86}.

A unital $\ell$-group $(G,u)$ is {\it projective} if whenever
$\psi\colon (A,a)\to(B,b)$ is a surjective unital $\ell$-homomorphism
and $\phi\colon (G,u)\to(B,b)$ is a unital $\ell$-homomorphism, there
is a unital $\ell$-homomorphism $\theta\colon (G,u)\to(A,a)$ such that
$\phi= \psi \circ \theta$.
As usual,  {\it unital $\ell$-homomorphisms} between unital
$\ell$-groups are group
hom\-o\-mor\-phisms that also
preserve the order unit and the lattice
structure, \cite{bigkeiwol}.

 Baker \cite{bak} and Beynon
  \cite[Theorem 3.1]{bey77} (also see \cite[Corollary
 5.2.2]{gla}) gave the following characterization: {\it An $\ell$-group
 $G$ is finitely generated projective iff it is finitely presented.}
While the $(\Rightarrow)$-direction
 still holds for every unital
 $\ell$-group $(G,u)$  (see, e.g., \cite[Proposition 5]{mun08}),
 in this paper we will show that
 various arithmetical, geometrical and
 topological conditions must be imposed to ensure that
 a finitely presented $(G,u)$ is projective.

 For $n=1,2,\ldots$ we let ${\mathscr M}_{n}$ denote the
unital $\ell$-group of all continuous functions
$f\colon [0,1]^{n}\to
\mathbb R$ having the following property: there are linear
polynomials $p_{1},\ldots,p_{m}$ with integer coefficients, such that
for all $x\in [0,1]^{n}$ there is $i\in \{1,\ldots,m\}$ with
$f(x)=p_{i}(x)$.  ${\mathscr M}_{n}$ is equipped with the pointwise
operations $+,-,\max,\min$ of $\mathbb R$, and with the constant
function 1 as the distinguished order unit.

\begin{proposition}
\label{proposition:free}
{\rm (\rm \cite[4.16]{mun86})}   ${\mathscr
M}_{n}$ is generated by the coordinate maps
$\xi_ {i} \colon
[0,1]^{n}\to \mathbb R$
and  the order unit $1$.
For every unital $\ell$-group $(G,u)$ and
$0\leq g_{1},\ldots,g_{n}\leq u$,
if the set $\{g_{1},\ldots,g_{n}, u\}$
generates $G$
  there is a unique unital
$\ell$-homomorphism $\psi$ of ${\mathscr M}_{n}$
onto $G$ such that
$\psi(\xi_ {i})=g_{i}$ for each $i=1,\ldots,n.$
\end{proposition}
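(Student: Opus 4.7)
I would split Proposition~\ref{proposition:free} into two parts: (i) the set $\{\xi_1, \ldots, \xi_n, 1\}$ generates $\mathscr M_n$, and (ii) the universal (free) property of $\mathscr M_n$ with respect to unital $\ell$-groups generated by $n$ elements of the unit interval.

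For (i), I would rely on the classical normal-form representation of piecewise linear functions with integer coefficients: every $f \in \mathscr M_n$ can be written as $f = \bigvee_{j} \bigwedge_{i} p_{ij}$, where each $p_{ij}$ is a linear polynomial with integer coefficients. Since each such $p_{ij}$ is a $\mathbb Z$-linear combination of $\xi_1, \ldots, \xi_n$ and the constant~$1$, it already lies in the unital $\ell$-subgroup of $\mathscr M_n$ generated by these elements; closing under $\vee$ and $\wedge$ then yields $f$. The existence of such a normal form can itself be proved by induction on the number of linearity domains of $f$, using ``Schauder hat'' functions subordinate to a unimodular triangulation of $[0,1]^n$ refining those domains.

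For (ii), uniqueness of $\psi$ is an immediate consequence of (i), since two unital $\ell$-homomorphisms that agree on a generating set must coincide. For existence, the most economical route is through the categorical equivalence $\Gamma$ between unital $\ell$-groups and MV-algebras, where $\Gamma(G,u) = \{x \in G : 0 \le x \le u\}$ is the unit interval of $G$ equipped with $x \oplus y = (x+y) \wedge u$ and $\neg x = u - x$. Under $\Gamma$, the unital $\ell$-group $\mathscr M_n$ corresponds to the MV-algebra of McNaughton functions on $[0,1]^n$, which by McNaughton's theorem is the \emph{free} MV-algebra on the coordinate maps $\xi_1, \ldots, \xi_n$. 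Since $0 \le g_i \le u$ places each $g_i$ in $\Gamma(G,u)$, freeness yields a unique MV-homomorphism $\Gamma(\mathscr M_n) \to \Gamma(G,u)$ sending $\xi_i \mapsto g_i$, which the equivalence $\Gamma$ lifts to the required unital $\ell$-homomorphism $\psi$. Surjectivity of $\psi$ is automatic, because its image is a unital $\ell$-subgroup of $G$ containing each of $g_1, \ldots, g_n, u$, and these generate $G$ by hypothesis.

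The main obstacle is McNaughton's theorem itself, a nontrivial result requiring genuine rational-polyhedral input (Schauder hats supported on unimodular triangulations of $[0,1]^n$, or, dually, stellar subdivisions of the standard fan). An alternative that bypasses MV-algebras would be to define $\psi$ directly by substituting $g_i$ for $\xi_i$ in the normal form $\bigvee_j \bigwedge_i p_{ij}$, but the required verification — that two representations of the same $f$ yield the same element of $G$ — is equivalent in substance to the same geometric content.
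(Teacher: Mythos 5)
The paper offers no proof of Proposition~\ref{proposition:free} at all---it is quoted verbatim from \cite[4.16]{mun86}---and your argument reconstructs essentially the proof behind that citation: McNaughton's theorem identifies $\Gamma({\mathscr M}_n)$ as the free MV-algebra on the coordinate maps, the categorical equivalence $\Gamma$ transfers this universal property to unital $\ell$-groups, and uniqueness plus surjectivity of $\psi$ follow from generation. Your reasoning is correct; the one small blemish is that Schauder hats subordinate to regular triangulations belong to the proof of McNaughton's theorem, not to the max-min normal form $f=\bigvee_j\bigwedge_i p_{ij}$ (which on a convex domain is proved directly from the linear pieces of $f$), and indeed your part (i) is already a consequence of the MV-algebra argument in part (ii), since every unital $\ell$-group is generated as a group by its unit interval.
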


   An
   {\it ideal} $\mathfrak i$ of a unital
  $\ell$-group $(G,u)$ is
  the kernel of a unital $\ell$-ho\-m\-o\-mor\-ph\-ism
  of $G$, (\cite[p.8  and  1.14]{goo2}).
  $\mathfrak i$ is {\it principal} if
it
  is singly (=finitely)
  generated.

A unital $\ell$-group
$(G,u)$ is {\it finitely presented} if
for some $n=1,2,\ldots,$
$(G,u)$ is unitally $\ell$-isomorphic to the
quotient of  ${\mathscr M}_{n}$ by some principal ideal
$\mathfrak j$, in symbols,
$(G,u)\cong \McNn/\mathfrak j$.

For every nonempty closed set
$X \subseteq \cube$ we introduce the notation
\begin{equation}
    \label{equation:restrictions}
\McNn\restrict X=\{f\restrict X\mid f\in \McNn\}
\end{equation}
for the unital $\ell$-group of restrictions to $X$
of the functions in $\McNn$.

Following  \cite[1.1]{sta},
by a {\it polyhedron} $P$
  in $\cube$  we
mean a finite union of
(always closed)  simplexes $P=S_{1}\cup\cdots\cup S_{t}$
   in $\cube$. If the
 coordinates of the vertices of every simplex $ S_{i}$
  are rational numbers,  $P$ is said to be {\it rational}.

    For any rational point $v \in
  \mathbb R^{n}$ the least common denominator of
  the coordinates of $v$
  is called the \emph{denominator} of $v$  and is denoted $\den(v)$.

The relationship between rational polyhedra and
finitely presented
unital $\ell$-groups  is given by
the following result:

\begin{proposition}
    \label{proposition:finpres}
    {\rm (\cite[Propositions 4 and 5]{mun08})}
Let  $(G,u)$ be a  unital
    $\ell$-group.
    \begin{itemize}
    \item[(a)]   $(G,u)$ is
finitely presented
iff there is  $n=1,2,\ldots$
and   a rational polyhedron $P\subseteq \cube$ such that
$(G,u)$ is unitally $\ell$-isomorphic to $\McNn\restrict P$.
\item[(b)] If  $(G,u)$ is
finitely generated projective then
$(G,u)$ is finitely presented.
\end{itemize}
\end{proposition}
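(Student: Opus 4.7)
The plan is to build a dictionary between ideals of $\McNn$ and closed subsets of $\cube$, refine it to a bijection between principal ideals and rational polyhedra for (a), and then exploit projectivity geometrically for (b).

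For (a), associate to every closed $X\subseteq\cube$ the ideal $\mathfrak i_X=\{f\in\McNn\mid f\restrict X\equiv 0\}$. The restriction homomorphism in \eqref{equation:restrictions} factors through an isomorphism $\McNn/\mathfrak i_X\cong\McNn\restrict X$, so (a) reduces to showing that principal ideals of $\McNn$ correspond exactly to rational polyhedra in $\cube$. In one direction, any $f\in\McNn$ is affine with integer coefficients on every cell of a suitable rational triangulation of $\cube$, so its zero set $Z(f)$ is a finite union of rational subsimplices, hence a rational polyhedron. Conversely, given a rational polyhedron $P=S_1\cup\cdots\cup S_t$, for each rational simplex $S_i$ one constructs $f_i\in\McNn$ with $Z(f_i)=S_i$ by summing absolute values of integer-coefficient affine forms describing the bounding hyperplanes and affine hull of $S_i$, and then $f=f_1\wedge\cdots\wedge f_t$ satisfies $Z(f)=P$. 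The main technical point is to identify $\mathfrak i_{Z(f)}$ with the principal ideal $\langle f\rangle=\{g:|g|\le k|f|\text{ for some }k\in\mathbb N\}$; the inclusion $\langle f\rangle\subseteq\mathfrak i_{Z(f)}$ is immediate, while for the reverse one passes to a common rational simplicial refinement on which both $f$ and a given $g\in\mathfrak i_{Z(f)}$ are affine and, exploiting compactness of $\cube$, reads off a vertex-by-vertex integer bound $|g|\le k|f|$.

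For (b), apply Proposition~\ref{proposition:free} to obtain a surjective unital $\ell$-homomorphism $\psi\colon\McNn\twoheadrightarrow(G,u)$. Projectivity of $(G,u)$ applied to $\psi$ together with $\mathrm{id}_{(G,u)}$ yields a section $\theta\colon(G,u)\to\McNn$ with $\psi\circ\theta=\mathrm{id}$, exhibiting $(G,u)$ as a retract of $\McNn$. Let $X\subseteq\cube$ be the closed set with $\ker\psi=\mathfrak i_X$, so $(G,u)\cong\McNn\restrict X$. The contravariant functor $Y\mapsto\McNn\restrict Y$ converts the algebraic retraction-section pair $(\theta,\psi)$ into a continuous piecewise-linear integer-coefficient map $\eta\colon\cube\to X$ fixing $X$ pointwise, that is, a $\mathbb Z$-retraction of $\cube$ onto $X$. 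Since $\eta$ is affine with integer coefficients on every simplex of a rational triangulation, it sends rational simplices to rational simplices, so $X=\eta(\cube)$ is a finite union of rational simplices, i.e., a rational polyhedron. Part (a) then delivers finite presentation of $(G,u)$.

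I expect the main obstacle to lie in (a), specifically the compactness/refinement argument establishing $\mathfrak i_{Z(f)}=\langle f\rangle$, which is the combinatorial heart of the theory of $\McNn$; once this is in hand, together with a careful setup of the contravariant duality between unital $\ell$-homomorphisms $\McNn\restrict Y\to\McNn\restrict X$ and $\mathbb Z$-maps $X\to Y$, the polyhedral conclusion in (b) is forced by the piecewise-affine integer-coefficient nature of the maps involved.
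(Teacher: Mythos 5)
The paper offers no proof of this proposition at all: it is imported verbatim from \cite[Propositions 4 and 5]{mun08}, so your proposal can only be measured against the standard argument that citation encapsulates. Your part (a) is essentially that argument, and it is sound: the whole weight does rest on the Nullstellensatz-type identity $\langle f\rangle=\mathfrak i_{Z(f)}$, and your refinement-plus-vertex-bound sketch proves it (with the small caveat that on a simplex where $f$ changes sign one needs the extra observation that any affine $g$ vanishing on $Z(f)\cap S$ is then a scalar multiple of $f$ on $S$, or else one refines to sign-constant simplexes so that $k|f|-|g|$ is affine and vertex-nonnegativity propagates). One concrete slip: in the converse construction, ``summing absolute values of the forms describing the bounding hyperplanes'' of $S_i$ gives a function vanishing on the \emph{intersection} of all facet hyperplanes, which is empty for a simplex of positive dimension; the inequality constraints $h_j\geq 0$ must enter through their negative parts $(-h_j)\vee 0$, only the affine-hull equations through absolute values.

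The genuine gap is in (b), at the sentence ``Let $X\subseteq\cube$ be the closed set with $\ker\psi=\mathfrak i_X$.'' Not every ideal of $\McNn$ has this form: for example $\mathfrak j=\{f\in\McN_1\mid f\equiv 0 \mbox{ on some neighborhood of } 0\}$ is an ideal of $\McN_1$, yet any $X$ with $\mathfrak i_X=\mathfrak j$ would have to satisfy $X\subseteq\bigcap_k Z((k\xi_1-1)\vee 0)=\{0\}$, while $\mathfrak i_{\{0\}}\neq\mathfrak j$. So the existence of such an $X$ for $\ker\psi$ is exactly what projectivity must be used to establish; it cannot be posited first. Worse, your next step is then circular: to convert the pair $(\theta,\psi)$ into a $\Zed$-retraction via the contravariant duality, you need $(G,u)$ already realized as $\McNn\restrict X$, i.e.\ the very fact being asserted. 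The repair is to reverse the order and stay inside $\McNn$: form the idempotent unital $\ell$-endomorphism $\sigma=\theta\circ\psi$ of $\McNn$; by Proposition \ref{proposition:free} and induction on terms (the same induction used in the paper's proof of Corollary \ref{corollary:parafrasi}(I)), $\sigma$ is substitution by the $\Zed$-map $\eta=(\sigma(\xi_1),\ldots,\sigma(\xi_n))\colon\cube\to\cube$; idempotence of $\sigma$ applied to the coordinates gives $\eta\circ\eta=\eta$; now \emph{define} $X:=\eta(\cube)$, a rational polyhedron since $\Zed$-maps carry rational polyhedra to rational polyhedra. Only afterwards does one conclude $\ker\psi=\ker\sigma=\{f\mid f\circ\eta=0\}=\mathfrak i_X$ and $(G,u)\cong\sigma(\McNn)\cong\McNn\restrict X$, and part (a) finishes the proof. (Minor additional point: to invoke Proposition \ref{proposition:free} at the outset you must first trade a finite generating set of $G$ for one lying in $[0,u]$, a routine manipulation with the order unit.) With this reordering your (b) becomes the standard proof.
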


One may now
naturally ask for which rational polyhedra $P
\subseteq \cube$ the unital $\ell$-group
 $\McNn\restrict P$ is projective.
In   Theorem \ref{theorem:about}
and Corollary \ref{corollary:parafrasi}
it is shown that  $P$
satisfies the following
{\it necessary} conditions:
   (i) $P$ is contractible,
 (ii) $P$ contains a vertex of
 the $n$-cube $\cube$ and
 (iii)  $P$ has a regular
 triangulation $\Delta$
  (as defined in
Section \ref{section:segunda}
following   \cite{wlo})
such that for every maximal simplex $T\in \Delta$,
the denominators of  the vertices of $T$ are
coprime.
As proved in   Corollaries \ref{corollary:tree} and
\ref{corollary:parafrasi},
these three conditions
 are {\it sufficient} for  $\McNn\restrict P$
 to be  projective in case
 $P$   is
 one-dimensional.  Further, if
  $P$ is an $n$-dimensional rational polyhedron
  satisfying (ii) and (iii), then
   $\McNn\restrict P$  is a
  projective unital
 $\ell$-group, provided
Condition (i) is strengthened
  to the collapsibility (\cite{whi, ewa, sta})  of
 at least one  triangulation of  $P$.

 We refer to
\cite{bigkeiwol,fuc, gla, goo2}   for $\ell$-groups, to
\cite{hat} for algebraic topology,
 to  \cite{sta}  for polyhedral topology, and to
\cite{ewa}  for regular fans---the homogeneous
correspondents of rational polyhedra.
Their desingularization procedures yield  a
key tool
for our results.

\section{Regular triangulations, Farey mediants and blow-ups}
\label{section:segunda}

  For every (always finite)
  simplicial complex ${\mathcal K}$
   the
  point-set union of the simplexes of
  ${\mathcal K}$ is
  denoted $|{\mathcal K}|$;
  $\,\,\,{\mathcal K}$ is said to be
  a \emph{triangulation} of $|{\mathcal K}|$.
A simplicial complex is
  said to be a \emph{rational}  if the vertices of all
  its simplexes are rational.
 Given simplicial complexes ${\mathcal K}$ and ${\mathcal H}$
with $|{\mathcal K}|=|{\mathcal H}|$
we say that ${\mathcal H}$ is a
{\it subdivision} of ${\mathcal K}$
if every simplex of ${\mathcal H}$ is a union of
 simplexes of ${\mathcal K}$.
For any rational point $v \in
\mathbb R^{n}$ the
integer vector
\begin{equation}
    \label{equation:homogeneous}
 \widetilde{v } = \den(v)(v,1)\in \mathbb Z^{n+1}
 \end{equation}
is called the {\it homogeneous correspondent} of $v$.
An  $m$-simplex $\conv(w_{0}, \ldots,w_{m}) \subseteq [0,1]^{n}$
is said to be
\emph{regular} if its vertices are rational
and the set of integer vectors
$\{\widetilde{{w}_0}, \ldots, \widetilde{{w}_m}\}$
can be extended to a basis of
the free abelian group ${\mathbb Z}^{n+1}$.
Following \cite{wlo}, a simplicial complex
${\mathcal K}$
is
said to be a \emph{regular triangulation} (of
$|\mathcal K|$) if all
its simplexes are regular.
Regular triangulations are called ``unimodular''
in  \cite{mun08}.
Given a regular triangulation  $\Delta$ with $|\Delta| \subseteq \cube$,
the homogeneous correspondents of its vertices
are the  generating vectors of a complex
of  cones in $\mathbb R^{n+1}$, which is
a regular (also known as ``nonsingular'')
  fan \cite{ewa}.

\smallskip

\begin{lemma}
    \label{lemma:existence}
    {\rm (\cite[Proposition 1]{mun08})}
    For every rational polyhedron  $P$ there is a regular triangulation
$\Delta$ such that
 $P = |\Delta|$.
 \end{lemma}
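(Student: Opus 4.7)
The plan is to reduce the lemma to the classical desingularization theorem for rational fans. I would first produce an ordinary (not yet regular) rational triangulation of $P$, and then iteratively subdivide it via Farey mediant blow-ups until every simplex becomes regular.

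First, since $P$ is by definition a finite union $S_{1}\cup\cdots\cup S_{t}$ of rational simplexes in $\cube$, a rational simplicial complex $\Sigma_{0}$ with $|\Sigma_{0}|=P$ is obtained by a standard refinement argument: consider the arrangement of the finitely many affine hyperplanes spanned by facets of the $S_{i}$, intersect with each $S_{i}$, and triangulate the resulting rational convex polytopes by successive barycentric subdivisions with rational barycenters. Every vertex of $\Sigma_{0}$ is then rational, although the simplexes of $\Sigma_{0}$ need not be regular.

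Next, I would pass to the homogeneous side. For each simplex $T=\conv(w_{0},\ldots,w_{m})\in\Sigma_{0}$, the rational cone $\sigma_{T}\subseteq\mathbb{R}^{n+1}$ generated by the homogeneous correspondents $\widetilde{w}_{0},\ldots,\widetilde{w}_{m}$ is well defined, and the collection of all such $\sigma_{T}$ together with their faces forms a rational fan $F$. Regular triangulations of $P$ correspond bijectively to nonsingular (in the toric-geometric sense) subdivisions of $F$ having the same support, so it suffices to exhibit a nonsingular subdivision of $F$.

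To achieve this I would apply the classical De Concini--Procesi/Ewald desingularization algorithm: for each non-regular cone $\sigma_{T}$, perform a stellar subdivision at a Farey mediant, namely at the primitive integer vector $\widetilde{w}_{i_{0}}+\cdots+\widetilde{w}_{i_{k}}$ attached to a suitably chosen subface. On the polyhedral side, this amounts to inserting the rational vertex $v^{\ast}$ with $\widetilde{v^{\ast}}=\widetilde{w}_{i_{0}}+\cdots+\widetilde{w}_{i_{k}}$ and splitting every simplex of the current triangulation that contains the corresponding subface; since the stellar subdivision is well defined globally on $F$, the result is again a rational simplicial subdivision of $P$. The main obstacle is termination. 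Here I would assign to each cone its multiplicity, $[\mathbb{Z}^{n+1}\cap\mathrm{span}(\sigma_{T}):\mathbb{Z}\widetilde{w}_{0}+\cdots+\mathbb{Z}\widetilde{w}_{m}]$, which equals $1$ precisely when $\sigma_{T}$ is regular, and show that a stellar subdivision at a cone of maximal multiplicity strictly decreases a well-chosen lexicographic invariant of the fan. After finitely many such blow-ups, all cones become nonsingular, yielding the required regular triangulation $\Delta$ with $|\Delta|=P$.
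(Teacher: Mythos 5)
The paper offers no proof of this lemma at all: it is imported verbatim from \cite[Proposition 1]{mun08}, and the argument behind that citation is precisely your two-step plan --- first produce a rational triangulation of $P$, then desingularize the associated homogeneous fan by finitely many stellar subdivisions (this is the procedure of \cite[Theorem 1.2]{mun88} that the present paper invokes repeatedly). Your Step 1 (hyperplane arrangement, then rational barycentric subdivision) and Step 2 (the affine--homogeneous correspondence between rational triangulations of $P$ and simplicial rational fans) are correct and match the intended argument.

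The gap is in your termination argument, and it comes from restricting the subdivision points to Farey mediants. If $\sigma$ has primitive generators $u_0,\ldots,u_m$ and one blows up at a primitive point $x=\sum_i \lambda_i u_i$, each new cone (the one in which $u_j$ is replaced by $x$, for $\lambda_j>0$) has multiplicity $\lambda_j\,\mu(\sigma)$. For a mediant of a subset of the generators all the relevant $\lambda_j$ equal $1$ (after dividing by the content of the sum), so whenever the mediant happens to be primitive the multiplicity does not drop. Concretely, take $\sigma\subseteq\mathbb{R}^3$ spanned by $(1,0,0)$, $(0,1,0)$, $(1,2,3)$: its multiplicity is $3$, every one of its mediants is primitive (e.g.\ the full mediant $(2,3,3)$), and the stellar subdivision at $(2,3,3)$ produces three cones each again of multiplicity $3$ --- so the maximal multiplicity is unchanged and the number of cones attaining it grows, defeating any invariant of the kind you describe. (Note also that your phrase ``the primitive integer vector $\widetilde{w}_{i_0}+\cdots+\widetilde{w}_{i_k}$'' is not accurate: sums of primitive vectors need not be primitive, e.g.\ $(1,0)+(1,2)=(2,2)$; this is why the paper defines Farey mediants only for \emph{regular} simplexes.) That mediant-only blow-ups nevertheless suffice is true, but it is essentially the nontrivial content of \cite[Theorem 1.2]{mun88} and of the W\l odarczyk--Morelli circle of ideas, not a routine multiplicity induction. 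The standard repair, which is the actual De Concini--Procesi/Ewald algorithm you name, is to blow up at an \emph{arbitrary} primitive lattice point $x=\sum_i\lambda_i\widetilde{w}_i$ with $0\le\lambda_i<1$ lying in the half-open fundamental parallelepiped of a non-regular cone: then every new cone has multiplicity $\lambda_j\,\mu(\sigma)<\mu(\sigma)$, the induction closes as in \cite{ewa}, and since such an $x$ has strictly positive last coordinate it still corresponds to a rational point of $P$, so the affine correspondence of your Step 2 goes through verbatim.
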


\begin{lemma}
    \label{Lem-LinearMap} Let
$S=\conv(v_{1},\ldots,v_{k})\subseteq [0,1]^{n}$ be a regular
$(k-1)$-simplex and $\{w_{1},\ldots,w_{k}\}$ a set of rational points
in $[0,1]^{m}.$  Then the following conditions are equivalent:
\begin{itemize}
\item[(i)] $\den(w_{i})$ is a divisor of $\den(v_{i})$, for each
$i=1,\ldots,k$.
\item[(ii)]
For some
integer matrix $M\in\Zed^{m\times n}$ and  vector
$b\in\Zed^{m}$
we have $M v_{i}+b=w_{i}$ for each $i=1,\ldots,k$.
\end{itemize}
\end{lemma}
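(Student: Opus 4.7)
The plan is to reformulate condition (ii) as a $\mathbb Z$-linear condition on the homogeneous correspondents. Given integers $M\in\mathbb Z^{m\times n}$ and $b\in\mathbb Z^m$, the block matrix $\tilde A=\begin{pmatrix} M & b\\ 0 & 1\end{pmatrix}\in\mathbb Z^{(m+1)\times(n+1)}$ lifts the affine map $v\mapsto Mv+b$ to a $\mathbb Z$-linear map that preserves the last coordinate; conversely, every $\mathbb Z$-linear map $\mathbb Z^{n+1}\to\mathbb Z^{m+1}$ preserving the last coordinate has this form. So (ii) is equivalent to asking for a $\mathbb Z$-linear $\tilde A$ satisfying $\tilde A\,\widetilde{v_i}=(\den(v_i)/\den(w_i))\,\widetilde{w_i}$ for all $i$.

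For the direction $(ii)\Rightarrow(i)$, I would form $\tilde A$ from $(M,b)$ and compute
\[
\tilde A\,\widetilde{v_i}=\den(v_i)(Mv_i+b,\,1)=\den(v_i)(w_i,\,1)=\frac{\den(v_i)}{\den(w_i)}\,\widetilde{w_i}.
\]
Because $\den(w_i)$ is the \emph{least} common denominator of the coordinates of $w_i$, the vector $\widetilde{w_i}$ is primitive, i.e.\ the greatest common divisor of its entries equals $1$. Hence the rational scalar $\den(v_i)/\den(w_i)$ must be an integer, which is precisely condition (i).

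For the harder direction $(i)\Rightarrow(ii)$, I would build $\tilde A$ by hand. Regularity of $S$ lets me extend $\{\widetilde{v_1},\dots,\widetilde{v_k}\}$ to a $\mathbb Z$-basis $\{\widetilde{v_1},\dots,\widetilde{v_k},u_{k+1},\dots,u_{n+1}\}$ of $\mathbb Z^{n+1}$. Define a $\mathbb Z$-linear map $\tilde A\colon\mathbb Z^{n+1}\to\mathbb Z^{m+1}$ on this basis by
\[
\tilde A\,\widetilde{v_i}:=\frac{\den(v_i)}{\den(w_i)}\,\widetilde{w_i}\quad(1\le i\le k),\qquad \tilde A\,u_j:=(0,\dots,0,(u_j)_{n+1}),
\]
where $(u_j)_{n+1}$ denotes the last coordinate of $u_j$. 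By hypothesis (i) every image is an integer vector, so $\tilde A$ is well defined. It preserves the last coordinate: on $\widetilde{v_i}$ this reads $(\den(v_i)/\den(w_i))\den(w_i)=\den(v_i)$, and on $u_j$ it holds by construction. Therefore $\tilde A=\begin{pmatrix} M & b\\ 0 & 1\end{pmatrix}$ for some $M\in\mathbb Z^{m\times n}$ and $b\in\mathbb Z^m$, and dividing $\tilde A\,\widetilde{v_i}=(\den(v_i)/\den(w_i))\widetilde{w_i}$ through by $\den(v_i)$ yields $Mv_i+b=w_i$, proving (ii).

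The main obstacle is the construction step in $(i)\Rightarrow(ii)$: one has to make sure the auxiliary basis vectors $u_{k+1},\dots,u_{n+1}$ can be assigned images compatible with the last-coordinate-preservation constraint, so that the resulting $\tilde A$ actually has block form with bottom row $(0,\dots,0,1)$. The prescription above makes this transparent, and the whole argument then reduces to the primitivity of the homogeneous correspondents $\widetilde{w_i}$, which is also what drives the easier direction.
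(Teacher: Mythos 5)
Your proof is correct and takes essentially the same route as the paper: regularity is used to extend $\{\widetilde{v_1},\ldots,\widetilde{v_k}\}$ to a $\mathbb Z$-basis of $\Zed^{n+1}$, and your $\mathbb Z$-linear map sending $\widetilde{v_i}\mapsto \den(v_i)(w_i,1)$ while preserving last coordinates on the auxiliary basis vectors is exactly the paper's matrix $CD^{-1}$, your assignment $u_j\mapsto(0,\ldots,0,(u_j)_{n+1})$ being one admissible choice of the paper's columns $d_j$. The differences are cosmetic: the paper works with explicit matrices rather than a map defined on a basis, and it omits the easy direction $(ii)\Rightarrow(i)$, which you prove correctly via primitivity of $\widetilde{w_i}$.
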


\begin{proof}
  For the nontrivial direction,
suppose  $\den(w_{i})$ is a divisor of $\den(v_{i})$, for each
$i=1,\ldots,k.$  With reference to (\ref{equation:homogeneous}),
let
 $\{\widetilde{v_{1}},\ldots,
     \widetilde{v_{k}},b_{k+1},\ldots,b_{n+1}\}$
     be a basis of the
     free abelian group $\Zed^{n+1}$,
     for suitable vectors
$b_{k+1},\ldots,b_{n+1}\in\Zed^{n+1}.$
 Let $D$  be the
     $(n+1)\times(n+1)$ integer
      matrix   whose columns are the vectors
     $\widetilde{v_{1}},\ldots,
     \widetilde{v_{k}},b_{k+1},\ldots,b_{n+1}$.
     Then
     $D^{-1}\in\Zed^{(n+1)\times (n+1)}.$
By hypothesis, for each  $i=1,\ldots,k$,
the vector
     $c_{i}=\den(v_{i})(w_{i},1)$ belongs to $\Zed^{m+1}.$
Let    $d_{k+1},\ldots,d_{n+1}$ be
vectors in $ \Zed^{m+1}$ such that
  for each $j=k+1,\ldots,n+1$
 the $(m+1)$th coordinate of
  $d_{j}$ coincides with the
  $(n+1)$th coordinate of $b_j$.
     Let  $C\in\Zed^{(m+1)\times (n+1)}$ be the matrix whose columns are given
     by the vectors $c_{1},\ldots,c_{k},d_{k+1},\ldots,d_{n+1}$.
Since the $(n+1)$th row of $D$ equals  the $(m+1)$th row of $C$,
we have
$$
       C D^{-1}=\left(\begin{tabular}{c|c}
       $M$ & $b$ \\
       \hline
       $0,\ldots,0 $ & $1$
       \end{tabular}\right)
     $$
     for some  $m\times n$ integer matrix $M$
     and integer vector $b\in\Zed^m$.
     For each  $i=1,\ldots,k$ we then have
     $
     (C D^{-1})\widetilde{v_{i}}=(C
     D^{-1})\den(v_{i})(v_{i},1)=\den(v_{i})(Mv_{i}+b,1).
     $
     By definition,
     $(C D^{-1})\widetilde{v_{i}}=c_{i}=\den(v_{i})(w_{i},1),$
    whence $Mv_{i}+b=w_{i}$ as desired.
\end{proof}

       \subsection*{Blow-up and Farey mediant}
Let $\Delta $  be a simplicial complex and
$p \in |\Delta|  \subseteq \mathbb R^{n}$.
Then the (Alexander, \cite{ale}) {\it blow-up ${\Delta}_{(p)}$
of $\Delta$ at $p$}
is the subdivision of $\Delta$ which is obtained
by replacing
every simplex $T \in \Delta$
that contains $p$
by the set
of all  simplexes of the form $\conv(F\cup\{p\})$, where
$F$ is any face of $T$ that does not contain $p$.
(We are using the terminology of \cite[p.376]{wlo}.
Synonyms of ``blow-up'' are ``stellar subdivision''
and
``elementary subdivision'',
\cite[III, 2.1]{ewa}.)

The notation
$\Delta_{(w_{1},\ldots,w_m)}$
stands for the final outcome of a
sequence of blow-ups of $\Delta$ at points
$w_{1},\ldots,w_m$, i.e.,
\begin{equation}
    \label{equation:iterated}
    \Delta_{(w_{1},\ldots,w_{t+1})}=
\Delta_{(w_{1},\ldots,w_{t})_{(w_{t+1})}}.
\end{equation}

For any regular  $m$-simplex $E =\conv(v_{0},\dots,v_m)
\subseteq \mathbb R^{n}$,
the {\it Farey mediant} of (the vertices of)
$E$ is the rational point $v$ of
$E$ whose homogeneous correspondent $\tilde v$ equals
$\widetilde{{v}_0}+\cdots+\widetilde{{v}_m}$.
This is in agreement with the classical terminology in
case $E=[0,1]$.
If $E$ belongs to a regular triangulation
$\Delta$ and $v$ is the Farey mediant of $E$ then the blow-up
$\Delta_{(v)}$ is a regular triangulation. Also the
converse is true,
(a proof can be obtained from \cite[V, 6.2]{ewa}).
  $\Delta_{(v)}$  will
be called the {\it Farey blow-up} of $\Delta$ at $v$.
By a {\it (Farey) blow-down} we understand  the inverse
operation of a (Farey) blow-up.

\smallskip
The proof of
the ``weak Oda conjecture'' by
W\l odarczyk \cite{wlo} and Morelli \cite{mor}
immediately yields:

\begin{lemma}
    \label{lemma:weakoda}
    Let $P$ be a rational polyhedron.
    Then any two regular triangulations
 of $P$ are connected by a finite path
    of Farey blow-ups and Farey blow-downs.
\end{lemma}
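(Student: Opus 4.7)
The plan is to transfer the problem from rational polyhedra in $[0,1]^n$ to regular fans in $\mathbb R^{n+1}$ via the homogeneous correspondence $v\mapsto \widetilde v$, and then quote the Włodarczyk--Morelli theorem in its fan-theoretic form.

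First I would fix two regular triangulations $\Delta_1$ and $\Delta_2$ of the rational polyhedron $P$. For each $\Delta_i$, the homogeneous correspondents $\widetilde v$ of its vertices generate a set of cones in $\mathbb R^{n+1}$; by definition of regularity for simplexes, these cones form a regular (nonsingular) fan $\Sigma_i$ whose support $|\Sigma_i|$ is the cone $C=\{\lambda(x,1)\mid x\in P,\ \lambda\geq 0\}\subseteq\mathbb R^{n+1}$. Since $|\Sigma_1|=|\Sigma_2|=C$, the two fans have the same support. Under this correspondence, the Farey blow-up of $\Delta$ at the Farey mediant $v$ of a simplex $E=\conv(v_0,\dots,v_m)$ corresponds exactly to the star subdivision of $\Sigma$ along the ray generated by $\widetilde{v_0}+\cdots+\widetilde{v_m}$; and regularity of the resulting triangulation matches nonsingularity of the resulting fan.

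Next I would invoke the solution of the weak Oda conjecture in its combinatorial form: any two nonsingular fans with a common support are connected by a finite sequence of star subdivisions of the above type (blow-ups along sums of generators of faces) and their inverses, with all intermediate fans nonsingular. This is exactly what \cite{wlo} and \cite{mor} establish (their statements are usually phrased for complete fans, but the argument is local to a common refinement and applies to fans of arbitrary support; alternatively, one can embed $\Sigma_1$ and $\Sigma_2$ into a completion and localize the resulting sequence to $C$).

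Finally, translating this sequence back through the homogeneous correspondence produces the required finite path of Farey blow-ups and Farey blow-downs between $\Delta_1$ and $\Delta_2$. The main obstacle is nothing more than verifying the exact dictionary between regular triangulations/Farey blow-ups on the polyhedral side and nonsingular fans/star subdivisions along sum rays on the fan side; the substantive content is entirely carried by \cite{wlo,mor}, which is why the present lemma is ``immediate''.
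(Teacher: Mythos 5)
Your proposal matches the paper's treatment: the paper gives no independent argument for this lemma, deriving it ``immediately'' from the W\l odarczyk--Morelli solution of the weak Oda conjecture via exactly the dictionary you describe (homogeneous correspondents turning regular triangulations of $P$ into regular fans supported on the cone over $P$, and Farey blow-ups into star subdivisions along the rays of sums of generators). Your added caveat about the cited theorems being phrased for complete fans is a point the paper passes over in silence; it does not alter the substance, and your proposal is correct in the same sense the paper's citation-proof is.
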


\begin{definition}
    \label{definition:strongly}
A   triangulation $\Delta$  of a
rational polyhedron $P\subseteq \cube$ is
said to be {\it strongly regular}  if
it is regular and
the greatest common divisor of the denominators of the
vertices of each maximal simplex
of $\Delta$  is equal to $1$.
\end{definition}

\begin{lemma}
    \label{Lem-BlowupPreserGCD}
  Let  $\Delta$ and $\nabla$ be regular triangulations
  of a rational polyhedron $P\subseteq\cube$.
  Then $\Delta$ is strongly regular iff $\nabla$ is.
\end{lemma}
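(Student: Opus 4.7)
The plan is to combine the weak Oda reduction with a short arithmetic identity about Farey mediants. By Lemma \ref{lemma:weakoda}, the regular triangulations $\Delta$ and $\nabla$ are joined by a finite path of Farey blow-ups and Farey blow-downs. Since a blow-down is just the inverse of a blow-up, it suffices to prove that whenever $\Delta$ is a regular triangulation of $P$ and $v$ is the Farey mediant of a simplex $E=\conv(v_0,\ldots,v_m)\in\Delta$, then $\Delta$ is strongly regular if and only if $\Delta_{(v)}$ is.

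The crucial arithmetic ingredient, to be recorded first, is the identity $\den(v)=\den(v_0)+\cdots+\den(v_m)$. It is obtained by reading off the last coordinate of the vector equation $\widetilde v=\widetilde{v_0}+\cdots+\widetilde{v_m}$ defining the Farey mediant, and using $\widetilde v=\den(v)(v,1)$.

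Next I would identify the maximal simplices of $\Delta_{(v)}$. Since $v$ lies in the relative interior of $E$, a simplex of $\Delta$ contains $v$ iff it contains $E$ as a face. Therefore every maximal simplex of $\Delta$ that does not have $E$ as a face survives unchanged as a maximal simplex of $\Delta_{(v)}$; while each maximal simplex $T=\conv(v_0,\ldots,v_m,w_1,\ldots,w_k)$ of $\Delta$ containing $E$ is replaced in $\Delta_{(v)}$ by the $m+1$ simplices
\[
T^{(i)}=\conv\bigl(v_0,\ldots,\widehat{v_i},\ldots,v_m,w_1,\ldots,w_k,v\bigr),\qquad i=0,\ldots,m,
\]
and a short maximality check (using that $T$ is maximal in $\Delta$ and that a facet of $T$ avoiding $v$ must be obtained by deleting some $v_i$) shows these are precisely the maximal simplices of $\Delta_{(v)}$ that contain $v$.

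To conclude, for such a $T$ set $d=\gcd\bigl(\den(v_0),\ldots,\den(v_m),\den(w_1),\ldots,\den(w_k)\bigr)$ and let $d_i$ denote the analogous gcd associated with $T^{(i)}$. Since $d$ divides $\den(v)=\sum_j\den(v_j)$ as well as every remaining generator of $d_i$, one has $d\mid d_i$; conversely $d_i$ divides $\den(v_i)=\den(v)-\sum_{j\neq i}\den(v_j)$ and hence $d_i\mid d$. Thus $d_i=d$ for each $i$, so the coprimality condition of Definition \ref{definition:strongly} holds for every maximal simplex of $\Delta_{(v)}$ iff it holds for every maximal simplex of $\Delta$. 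The only step requiring genuine care is the combinatorial identification of the new maximal simplices that contain $v$; once it is in place the statement follows from the Farey identity $\den(v)=\sum\den(v_i)$ by a one-line divisibility argument.
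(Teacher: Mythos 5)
Your proof is correct and follows essentially the same route as the paper's: reduce via Lemma \ref{lemma:weakoda} to a single Farey blow-up, then use the identity $\den(v)=\sum_j\den(v_j)$ to show the gcd over each affected maximal simplex is unchanged. You merely make explicit two points the paper leaves tacit (the identification of the maximal simplices of $\Delta_{(v)}$ and the two-way divisibility check $d\mid d_i$, $d_i\mid d$), which is a faithful filling-in rather than a different argument.
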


\begin{proof}
In view of
  Lemma \ref{lemma:weakoda} it is enough to
argue in case
$\Delta$ is the blow-up at
the Farey mediant $v$
of an  $m$-simplex
$S=\conv(v_0,\ldots,v_m)\in\nabla$.
Let $M\in\nabla$ be a maximal $(m+k)$-simplex
such that $S\subseteq M$. There
are $w_1,\ldots,w_k\in M$ such that
$M=\conv(v_0,\ldots,v_m,w_1,\ldots,w_k)$.
Since $\den(v)$ is equal to  $\sum_{j=0}^m \den(v_j)$,
the greatest common divisor of  the integers
$\den(v_0),\ldots,\den(v_m),\den(w_1),\ldots,\den(w_k)$
coincides with the greatest common divisor of
$$\den(v_0),\ldots,\den(v_{i-1}),
\den(v),\den(v_{i+1}),\ldots,\den(v_m),\den(w_1),
\ldots,\den(w_k).$$
\end{proof}

    \begin{lemma}
	\label{lemma:bigden}
	If $T=\conv(v_1,\ldots,v_{t})
	\subseteq \cube$
	is a regular
	$(t-1)$-simplex and the denominators
of its vertices
	are coprime, then for all large integers $l$ there is
	a rational point $v \in T$ such that $\den(v)$ is a divisor
	of $l$.
	\end{lemma}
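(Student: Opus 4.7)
The plan is to recognize the statement as a classical question about numerical semigroups and invoke the Sylvester--Frobenius theorem. Write $d_i = \den(v_i)$ for $i=1,\ldots,t$; by hypothesis $\gcd(d_1,\ldots,d_t)=1$, so the Sylvester--Frobenius theorem guarantees that every sufficiently large integer $l$ admits a representation
\[
l \,=\, b_1 d_1 + \cdots + b_t d_t, \qquad b_i \in \Zed,\ b_i \geq 0.
\]
Given such a representation, I would set
\[
v \,=\, \sum_{i=1}^{t} \frac{b_i d_i}{l}\, v_i.
\]
The coefficients $b_i d_i/l$ are non-negative and sum to $1$, so $v$ is a convex combination of the vertices of $T$, hence $v \in T$. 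Since $d_i v_i \in \Zed^n$ by the definition of $\den(v_i)$, one has $l v = \sum_i b_i (d_i v_i) \in \Zed^n$, which is equivalent to $\den(v) \mid l$, as required.

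The underlying geometric picture is that, via the homogeneous correspondents $\widetilde{v_i}$, the rational points of $T$ with denominator dividing $l$ are precisely the integer points of the cone $\mathbb{R}_{\geq 0}\widetilde{v_1}+\cdots+\mathbb{R}_{\geq 0}\widetilde{v_t}$ lying on the hyperplane $\{x_{n+1}=l\}$; the set of admissible $l$ is therefore a numerical semigroup, and Sylvester--Frobenius supplies the last step. Notice that the regularity of $T$ plays no role in the direction required here; it would be needed only for a converse characterization of \emph{all} admissible $l$, using the fact that $\{\widetilde{v_1},\ldots,\widetilde{v_t}\}$ extends to a $\Zed$-basis of $\Zed^{n+1}$. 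Consequently, there is no genuine obstacle in the argument: the only thing to verify is that the explicit candidate $v$ has integer vector $lv$, and this follows immediately from $d_i v_i \in \Zed^n$.
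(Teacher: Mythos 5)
Your proof is correct, and it takes a genuinely more elementary route than the paper. Both arguments ultimately hinge on the same arithmetic fact: a representation $l=b_1d_1+\cdots+b_td_t$ with non-negative integers $b_i$, from which the same point $v=\sum_i (b_id_i/l)\,v_i$ (equivalently, the rational point whose homogeneous correspondent spans the ray of $\sum_i b_i\widetilde{v_i}$) does the job. The difference is how that representation is obtained. You invoke the classical Sylvester--Frobenius theorem on numerical semigroups, which needs only the coprimality hypothesis; as you correctly observe, regularity of $T$ then plays no role at all, so your argument actually proves a slightly stronger statement (any rational simplex with coprime vertex denominators works). The paper instead proves the representation from scratch by a lattice-point argument: it extends $\widetilde{v_1},\ldots,\widetilde{v_t}$ to a basis of $\Zed^{n+1}$ (this is where regularity enters), uses coprimality to normalize the added basis vectors to have vanishing last coordinate, shows that for large $l$ the slice $H_l\cap C$ of the spanned cone is wide enough to contain a unit cube and hence an integer point $p$, and then uses the fact (\cite[V, 1.11]{ewa}) that integer points of a cone spanned by part of a $\Zed$-basis are exactly the non-negative integer combinations of the generators --- in effect a self-contained geometric proof of the Frobenius-type statement in this setting. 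What each buys: your version is shorter, purely arithmetic, and isolates the true hypothesis (coprimality); the paper's version avoids citing Frobenius and stays inside the regular-fan machinery used throughout, at the cost of carrying the basis-extension and cone bookkeeping. Your final verification that $lv\in\Zed^n$ implies $\den(v)\mid l$ is the right closing step and is exactly what the paper's last lines establish (modulo its typo writing $r$ for $q$).
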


\begin{proof} Let $\widetilde{{v_1}},\ldots,\widetilde{v_t},
w_{t+1},\ldots,w_{n+1} $ be a basis $\mathcal B$ of the free abelian
group $\Zed^{n+1}$.  For each $i=1,\ldots,t,$ let $d_i=\den(v_i)$.
Since $\gcd(d_{1},\ldots,d_t)=1$,
without loss of generality the $(n+1)$th coordinate
of each vector  $w_j$ can be assumed to be $0$.  Let further
$$
C=\mathbb{R}_{\geq 0}\widetilde{v_1}+\cdots+ \mathbb{R}_{\geq 0}\widetilde{v_t} +
\mathbb{R}_{\geq 0}w_{t+1}+ \cdots \mathbb{R}_{\geq 0}w_{n+1}
$$
denote the {\it
cone} positively spanned by
$\mathcal B$ in  the vector space $\mathbb R^{n+1}$.
Let the vector
$s=(s_1,\ldots,s_{n+1})\in\Zed^{n+1}$ be defined
by $s=\widetilde{v_1}+\cdots,\widetilde{v_t}+
w_{t+1}+\cdots+w_{n+1}$.
Let $\mathbb{R}_{\geq 0}\,s$ denote the
{\it ray} of $s$, i.e., the positive real span of
the vector
$s$ in $\mathbb R^{n+1}.$
For every integer $l=1,2,\ldots$,
let the hyperplane $H_{l}$ be defined by
$$H_{l}=\{(y_1,\ldots,y_{n+1})
\in \mathbb R^{n+1}\mid y_{n+1}=l\}.$$
The vanishing of the last coordinate of each
$w_j$ is to the effect that
$s_{n+1}=d_1+\cdots+d_k>0,$ whence
the set $H_l\cap\mathbb{R}_{\geq 0}\,s$
contains a single point, denoted $h_l$.
This  is a rational point lying
in the interior of $C$.
In particular,
 for some $0<\epsilon\in\mathbb R\,\,$
 the point
$h_1$ lies in a closed $n$-cube
 of side length $\epsilon$
  contained in $C\cap H_1$.
Consequently,  for all
large integers $l$, the rational point
$h_l$ lies in some closed unit $n$-cube $D_l$
contained in the convex set $H_l\cap C.$
Necessarily $D_l$ contains an integer
point $p=(p_1,\ldots,p_n,l)$.

To conclude the proof, as
  noted in \cite[V, 1.11]{ewa},
there are integers $m_1,\ldots,m_{n+1} \geq 0$ such that
$p=m_1\tilde{v_1}+\cdots+m_{t}
\tilde{v_t}+m_{t+1}w_{t+1}+\cdots+m_{n+1}w_{n+1}.$
Let the   vector $q\in \mathbb Z^{n+1}$ be defined by
$
q=m_1\tilde{v_1}+\cdots+m_{t}\tilde{v_t}.
$
Since the $(n+1)$th coordinates
of $p$ and of $q$ are  equal,
$m_1d_1+\cdots+m_td_t=l$.
Let $v$ be the only
rational point of $\cube$ whose homogeneous correspondent $\tilde v$
lies on the ray  $\mathbb R_{\geq 0}\,q$ of $q$.
Then $v$ belongs to $T$, and $r$ is a
positive integer multiple of $\tilde v.$  Thus the
$(n+1)$th coordinate $\den(v)$
of $\tilde v$    is a divisor of  the $(n+1)$th coordinate  $l$
of  $q$.
  \end{proof}

Our next result essentialy follows from
Cauchy's 1816 analysis of the Farey sequence,
({\it Oeuvres}, II S\'erie, Tome VI, 1887, pp.146--148,
or Tome II, 1958, pp.207--209),
and is also a consequence of the
De Concini-Procesi theorem on elimination
of points of indeterminacy,
\cite[p.252]{ewa}.  For the sake of completeness
we give the elementary proof here:

\begin{proposition}
    \label{proposition:cauchy}
    If $T\subseteq \cube$ is a regular simplex
    then for every rational point  $v\in T$
    there is a sequence of regular complexes
    $
    \Delta_0=\{T\mbox{ and its faces} \},\,
    \Delta_1,\ldots,\Delta_u
    $
    such that $\Delta_{i+1}$ is a Farey blow-up of
    $\Delta_{i}$,  and $v$ is a vertex of
    (some simplex of) $\Delta_u$.
    \end{proposition}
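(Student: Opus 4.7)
The plan is to argue by well-founded induction on a numerical invariant of the pair $(v,\Delta)$ that measures how ``far'' the rational point $v$ is from being a vertex of the current regular complex $\Delta$. At any stage of the construction, $v$ lies in the relative interior of a unique simplex $F=\conv(v_0,\ldots,v_m)$ of $\Delta$. Because $F$ is regular, the set $\{\widetilde{v_0},\ldots,\widetilde{v_m}\}$ extends to a $\Zed$-basis of $\Zed^{n+1}$, so the integer vector $\widetilde v$ has a unique expansion
\[
\widetilde v=\sum_{i=0}^{m}\lambda_i\,\widetilde{v_i},\qquad \lambda_i\in\Zed_{>0},
\]
the positivity following from $v$ lying in the relative interior of $F$. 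Define the invariant $\sigma(v,\Delta)=\sum_{i=0}^{m}\lambda_i\in\Zed_{>0}$.

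The base case is $\sigma(v,\Delta)=1$: one positive integer summing to $1$ forces $m=0$ and $\lambda_0=1$, hence $v=v_0$ is a vertex of $\Delta$ and no further blow-up is needed. For the inductive step I would assume $v$ is not a vertex, so $m\geq 1$, and perform the Farey blow-up of $\Delta$ at the Farey mediant $p$ of $F$, for which $\widetilde p=\widetilde{v_0}+\cdots+\widetilde{v_m}$. The paper's discussion of blow-ups at Farey mediants guarantees that $\Delta_{(p)}$ is again a regular complex. If $v=p$ we are done in one extra step; otherwise, setting $\alpha=\min_j\lambda_j$ and $S=\{i:\lambda_i>\alpha\}$, the simplex $\conv(\{v_i : i\in S\}\cup\{p\})$ is of the form $\conv(F''\cup\{p\})$ for the proper face $F''=\conv\{v_i:i\in S\}$ of $F$, so it is a simplex of $\Delta_{(p)}$, and direct substitution of $\widetilde p=\sum_i \widetilde{v_i}$ shows that $v$ lies in its relative interior with coordinates $\alpha$ on $\widetilde p$ and $\lambda_i-\alpha>0$ on each $\widetilde{v_i}$, $i\in S$. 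A short arithmetic comparison then gives
\[
\sigma(v,\Delta)-\sigma(v,\Delta_{(p)})=m\alpha\geq 1,
\]
so $\sigma$ strictly decreases and the induction terminates.

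The initial complex $\Delta_0$ is regular (faces of a regular simplex are regular), so the procedure is well-posed. The main step to verify carefully is the identification of the new carrying simplex together with the integrality and positivity of the new coordinates $\alpha$ and $\lambda_i-\alpha$; once this is in place, well-foundedness of the positive integer invariant $\sigma$ forces the process to halt, and by the base case it can halt only when $v$ is a vertex of the complex reached.
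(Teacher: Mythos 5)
Your proof is correct, but it follows a genuinely different route from the paper's. The paper's argument is indirect: it fixes an arbitrary well-ordering $\omega$ of the rational edges of $\cube$, repeatedly blows up the current complex at the Farey mediant of the $\omega$-first edge $\conv(w_1,w_2)$ satisfying $\den(w_1)+\den(w_2)\leq\den(v)$, observes that the process must halt because $\cube$ contains only finitely many rational points of denominator at most $\den(v)$, and then derives a contradiction if $v$ fails to be a vertex of the final complex: by the fact recorded in \cite[V, 1.11]{ewa}, the smallest simplex $\conv(w_1,\ldots,w_r)$ containing $v$ would satisfy $\den(v)\geq\den(w_1)+\cdots+\den(w_r)$ with $r\geq 2$, so one more blow-up would still be available. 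You instead blow up precisely at the Farey mediant of the carrier of $v$ and run a descent on the invariant $\sigma(v,\Delta)=\sum_i\lambda_i$, where $\widetilde{v}=\sum_i\lambda_i\widetilde{v_i}$ is the expansion of $\widetilde{v}$ over the homogeneous correspondents of the carrier's vertices; the integrality and positivity of the $\lambda_i$ is exactly the same Ewald fact, used positively rather than for a contradiction. Your bookkeeping is sound: when $v\neq p$ the set $S=\{i:\lambda_i>\alpha\}$ is a nonempty proper subset of the index set (all $\lambda_i$ equal would force $v=p$), so $\conv(\{v_i:i\in S\}\cup\{p\})$ is indeed a simplex of $\Delta_{(p)}$ by the definition of blow-up, it carries $v$ with positive integer coordinates $\alpha$ and $\lambda_i-\alpha$, and the drop $\sigma(v,\Delta)-\sigma(v,\Delta_{(p)})=m\alpha\geq 1$ is computed correctly. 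What your approach buys: it is constructive (the blow-up points are dictated by $v$, with no auxiliary well-ordering), it yields an explicit bound of at most $\sigma(v,\Delta_0)-1$ blow-ups, and it mirrors the classical multiplicity-descent arguments of toric desingularization. What the paper's version buys: every blow-up is performed at the mediant of an edge, i.e.\ of a $1$-simplex, keeping the proof close to the Farey--Cauchy setting invoked just before the proposition, at the price of a non-canonical ordering and an argument by contradiction.
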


\begin{proof}
Let $\omega$ be a fixed but otherwise arbitrary
 well-ordering of the set of all pairs of distinct
rational points  (=edges) in $\cube$.
We now inductively
define the regular triangulation $\Delta_{i+1}$ of $T$ by

\begin{quote} $\Delta_{i+1}=$ the  blow-up of $\Delta_{i}$ at the
Farey mediant of the $\omega$-first edge $\conv(w_1,w_2)$ of $\Delta_{i}$
such that $\den(w_1)+\den(w_2)\leq \den(v)$.
\end{quote}

\noindent This
sequence must terminate after a finite number $u$ of steps, just because
there are only finitely many rational points $w$ in $\cube$ satisfying
$\den(w) \leq \den(v)$.  Let $F$ be the {\it
smallest} simplex of $\Delta_u$ containing $v$.  In other words,
$F$ is the
intersection of all simplexes of $\Delta_u$ containing $v$.  It
follows that $v$ belongs to the relative interior of $F$.
By way of contradiction,  suppose  $v$ is not a vertex of $F$.
Then, for some
$w_1,\ldots,w_r\in \cube$
with $r\geq 2$, we have
$F=\conv(w_1,\ldots,w_r)$  and
  $\den(v)\geq \den(w_{1})+\cdots+\den(w_{r})$.
The inequality is strict, unless $v$ is the Farey mediant of
$F$.  (See e.g., \cite[ V, 1.11]{ewa}.)
A fortiori,  $\den(v)\geq \den(w_{1})+\den(w_{2})$, whence
the Farey blow-up $\Delta_{u+1}$ of $\Delta_{u}$ exists,
against our assumption about  $u$.
    \end{proof}

\section{$\mathbb Z$-retracts and
projective unital $\ell$-groups}

Given rational polyhedra $P\subseteq [0,1]^{n}$ and $Q\subseteq
[0,1]^{m}$ together with a map $\eta \colon P \rightarrow Q$, we say
that $\eta$ is a {\it $\Zed$-map}
if there is
a   triangulation $\mathcal K$  of $P$ such that over every
simplex $T$  of  $\mathcal K$,    $\eta$
coincides with a linear map
$\eta_{T}$  with
integer coefficients.

Since the intersection
of any two simplexes of
$\mathcal K$ is again a
(possibly empty)  simplex of
$\mathcal K$,  the continuity of
$\eta$ follows automatically.
The assumed properties
of the finite set of maps
$\{\eta_T\mid T\in \mathcal K\}$,
jointly with
the rationality of $P$, are to the effect
that
$\mathcal K$ can be assumed rational,
without loss of generality.
It follows that
 $\eta(P)$ is
a rational polyhedron in $[0,1]^m.$

\smallskip
A $\Zed$-map $\theta \colon P \rightarrow Q$ is said to be a {\it
$\Zed$-homeomorphism} (of $P$ onto $Q$)
if it is one-one onto $Q$ and the inverse
$\theta^{-1}$ is a $\Zed$-map.

A $\Zed$-map $\sigma\colon P\rightarrow P$ is a {\it
$\Zed$-retraction of P} if it is idempotent,
$\sigma\circ\sigma = \sigma.$
The rational polyhedron
$R=\sigma(P)\subseteq \cube$ is said to be a
$\Zed${\it-retract of} $P$.

\medskip

  If  $U, V, W$  are rational polyhedra in $\cube$,
$\mu$ is a $\Zed$-retraction of $U$ onto $V$,
and
$\nu$ is a $\Zed$-retraction of $V$ onto $W$,
then the
composite map  $\nu\circ\mu$ is a
$\Zed$-retraction of $U$ onto $W$.

\medskip
The relationship between $\mathbb Z$-retracts of cubes and
finitely generated projective
unital $\ell$-groups is given by the following

\begin{theorem}
\label{Thm_Projective_Retraction} A  unital
$\ell$-group $(G,u)$ is
finitely generated projective iff it is
unitally $\ell$-isomorphic
to $\McNn\restrict P$ for some  $n=1,2,\ldots$ and some
$\mathbb Z$-retract $P$  of $\cube$.
\end{theorem}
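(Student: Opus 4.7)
The plan is to dualize projectivity geometrically: the assignment $\sigma\mapsto(f\mapsto f\circ\sigma)$ between $\mathbb Z$-maps of rational polyhedra and unital $\ell$-homomorphisms of their function algebras should convert $\mathbb Z$-retractions of $\cube$ into categorical retracts of the free object $\McNn$, and conversely. Since $\McNn$ is free on the coordinate maps together with the order unit (Proposition \ref{proposition:free}), it is projective, and a retract of a projective is projective; this reduces the theorem to an exercise in the polyhedra/algebra correspondence.

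For the ``if'' direction, let $\sigma\colon\cube\to\cube$ be a $\mathbb Z$-retraction with image $P$. The restriction $\rho\colon\McNn\to\McNn\restrict P$ is a surjective unital $\ell$-homomorphism, and the rule $\iota(f\restrict P)=f\circ\sigma$ defines a unital $\ell$-homomorphism $\McNn\restrict P\to\McNn$: well-definedness uses $\sigma(\cube)\subseteq P$, and $f\circ\sigma$ lies in $\McNn$ because the composition of two $\mathbb Z$-piecewise linear maps is again such. Since $\sigma$ fixes $P$ pointwise, $\rho\circ\iota=\mathrm{id}$, so $\McNn\restrict P$ is a retract of $\McNn$, hence projective.

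For the converse, assume $(G,u)$ is finitely generated projective. Proposition \ref{proposition:finpres} yields $(G,u)\cong\McNn\restrict P$ for some rational polyhedron $P\subseteq\cube$, so the restriction $\rho\colon\McNn\to\McNn\restrict P$ admits a section $\theta$. First I would set $\sigma_i=\theta(\xi_i\restrict P)$; each $\sigma_i$ takes values in $[0,1]$ because $\theta$ preserves the order and the unit, so $\sigma=(\sigma_1,\ldots,\sigma_n)\colon\cube\to\cube$ is a well-defined $\mathbb Z$-map. Next I would observe that both $h\mapsto\theta(h\restrict P)$ and $h\mapsto h\circ\sigma$ are unital $\ell$-homomorphisms $\McNn\to\McNn$ that agree on the generators $\xi_i$, so the uniqueness clause of Proposition \ref{proposition:free} forces $\theta(h\restrict P)=h\circ\sigma$ for every $h\in\McNn$. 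Applying $\rho\circ\theta=\mathrm{id}$ to $\xi_i\restrict P$ yields $\sigma_i\restrict P=\xi_i\restrict P$, so $\sigma\restrict P=\mathrm{id}_P$; and for any $h\in\McNn$ vanishing on $P$ one gets $h\circ\sigma=\theta(0)=0$ on all of $\cube$, forcing $\sigma(\cube)\subseteq P$. Hence $\sigma$ is a $\mathbb Z$-retraction of $\cube$ onto $P$.

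The one nonroutine ingredient is producing, for a given rational polyhedron $P\subseteq\cube$, a function $h\in\McNn$ whose zero set is exactly $P$; I would build it via Lemma \ref{lemma:existence} by choosing a regular triangulation of $\cube$ in which $P$ appears as a subcomplex, then assembling a $\mathbb Z$-piecewise-linear function that is zero on the vertices of $P$ and positive on the remaining vertices. Every other step is a mechanical verification that the relevant assignments preserve addition, $\max$, $\min$, and the unit---the functorial content of the correspondence between rational polyhedra with $\mathbb Z$-maps and finitely presented unital $\ell$-groups.
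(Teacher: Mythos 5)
Your proof is correct in outline, but it takes a genuinely different route from the paper. The paper's own proof is a two-line transfer argument: it invokes the categorical equivalence $\Gamma$ between unital $\ell$-groups and MV-algebras from \cite[3.9]{mun86}, observes that $\Gamma$ preserves finite generation and projectivity, and then quotes \cite[Theorem 1.2]{cabmun}, where the statement is proved in the MV-algebraic setting. You instead reconstruct the underlying duality argument directly at the $\ell$-group level: a $\mathbb Z$-retraction of $\cube$ onto $P$ makes $\McNn\restrict P$ a unital $\ell$-retract of the free algebra $\McNn$, hence projective; conversely, a section $\theta$ of the restriction map $\rho\colon\McNn\to\McNn\restrict P$ produces $\sigma=(\theta(\xi_1\restrict P),\ldots,\theta(\xi_n\restrict P))$, which the uniqueness clause of Proposition \ref{proposition:free} identifies with the substitution homomorphism $h\mapsto h\circ\sigma$, whence $\sigma$ is idempotent with image $P$. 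This is essentially the content of the cited result of \cite{cabmun}, redone for $\ell$-groups; what it buys is self-containedness (no detour through $\Gamma$ and MV-algebras), at the cost of reproving what the paper outsources.

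There is, however, one step in your sketch that fails as written: the construction of $h\in\McNn$ with zero set \emph{exactly} $P$, which you need to conclude $\sigma(\cube)\subseteq P$. If $\nabla$ is a rational (or regular) triangulation of $\cube$ containing a triangulation $\mathcal K$ of $P$ as a subcomplex, and $h$ is the piecewise linear function vanishing on the vertices of $\mathcal K$ and positive on the remaining vertices of $\nabla$, then the zero set of $h$ is the union of all simplexes of $\nabla$ \emph{all of whose vertices} lie in $\mathcal K$; this can be strictly larger than $|\mathcal K|=P$ unless $\mathcal K$ is a \emph{full} subcomplex of $\nabla$. For instance, if $P$ is the boundary of a $2$-simplex $T\in\nabla$, your $h$ vanishes on all of $T$. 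The gap is easily repaired: either subdivide first so that $\mathcal K$ becomes full (e.g., blow up every simplex of $\nabla$ not in $\mathcal K$ whose vertices all lie in $\mathcal K$, at its Farey mediant), or bypass the construction entirely, since your argument only needs
$\sigma(\cube)\subseteq\bigcap\{h^{-1}(0)\mid h\in\McNn,\ h\restrict P=0\}=P$,
and the displayed equality is exactly the separation property \cite[4.17]{mun86} that the paper itself invokes in the proof of Corollary \ref{corollary:parafrasi}; alternatively, the existence of a single $h$ with $h^{-1}(0)=P$ is part of what \cite{mun08} establishes for Proposition \ref{proposition:finpres}(a). With that repair, your argument goes through.
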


\begin{proof}
In \cite[3.9]{mun86} a
categorical equivalence  $\Gamma$
is established between
unital $\ell$-groups and {\it MV-algebras}---those
algebras satisfying the same $(\oplus,\neg)$-equations
as  the unit interval $[0,1]$ equipped
with truncated addition  $x\oplus y = \min(x+y,1)$
and involution $\neg x=1-x$.  By definition,
$\Gamma(G,u)=
\{g\in G \mid 0\leq g\leq u\}$.
Further, for every unital $\ell$-homomorphism
$\theta\colon (G,u)\to (G',u')$,
$\Gamma(\theta)$ is the restriction of $\theta$ to
 $\Gamma(G,u)$.
The preservation properties of
$\Gamma$ are to the effect that
$(G,u)$ is finitely generated
projective iff so is  $\Gamma(G,u)$,
(see \cite[3.4, 3.5]{mun86}).
Now apply
\cite[Theorem 1.2]{cabmun}.
\end{proof}

\smallskip

Let, as above,
$P\subseteq [0,1]^{n}$ and $Q\subseteq
[0,1]^{m}$ be
rational polyhedra, together with a $\Zed$-map
$\eta\colon P\to Q.$  Then for
every rational point $v\in P$,
\begin{equation}
\label{equation:denominators} \den(\eta (v))\,\, \mbox{ is a divisor
of }\,\, \den(v).  \end{equation}
Conversely, we have

\begin{lemma}
\label{lemma-ExtensionsToZmorphisms}
Let $P\subseteq \cube$ be a
rational polyhedron, $\Delta$  a regular triangulation of $P$, and
$\mathcal{V}$ the set of vertices of $\Delta$.  Let the map
$f\colon\mathcal{V}\rightarrow [0,1]^m$ be such that
$\den(f(v))$ is a divisor of
$\den(v)$ for every $v\in\mathcal{V}$.
Then $f$
can be uniquely extended to a
$\Zed$-map $\eta\colon P\rightarrow [0,1]^m$ which is linear on
each simplex of $\Delta$.  \end{lemma}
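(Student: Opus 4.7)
The plan is to build $\eta$ piecewise over the regular triangulation $\Delta$ by invoking Lemma~\ref{Lem-LinearMap} on each simplex, and then to verify that the local pieces agree on intersections and that the resulting function takes values in $[0,1]^m$. Uniqueness will fall out of the affinity requirement.

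\emph{Step 1 (local construction).} Fix a simplex $T=\conv(v_1,\dots,v_k)\in\Delta$. Since $\Delta$ is regular, $T$ is a regular simplex, and by hypothesis $\den(f(v_i))$ divides $\den(v_i)$ for each $i$. Lemma~\ref{Lem-LinearMap} then supplies an integer matrix $M_T\in\Zed^{m\times n}$ and an integer vector $b_T\in\Zed^{m}$ with $M_Tv_i+b_T=f(v_i)$ for all $i=1,\dots,k$. Define $\eta_T\colon T\to\mathbb R^{m}$ by $\eta_T(x)=M_Tx+b_T$.

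\emph{Step 2 (compatibility).} For any two simplexes $T,T'\in\Delta$, the intersection $T\cap T'$ is a common face whose vertices are vertices of $\Delta$. On these shared vertices $\eta_T$ and $\eta_{T'}$ both equal $f$, and each is affine on its simplex, so by the uniqueness of affine extension from the vertices of a simplex the restrictions coincide on $T\cap T'$. Hence the family $\{\eta_T\mid T\in\Delta\}$ assembles into a single well-defined function $\eta\colon P\to\mathbb R^{m}$.

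\emph{Step 3 (image in the cube and $\mathbb Z$-map property).} On each simplex $T=\conv(v_1,\dots,v_k)\in\Delta$ the map $\eta_T$ is affine, so $\eta(T)=\conv(f(v_1),\dots,f(v_k))\subseteq[0,1]^m$ by convexity of the cube; therefore $\eta(P)\subseteq[0,1]^m$. By construction $\eta$ coincides on every simplex of the (rational) triangulation $\Delta$ with an affine map having integer coefficients, so $\eta$ is a $\Zed$-map in the sense of the definition recalled above.

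\emph{Step 4 (uniqueness).} Any $\Zed$-map $\eta'\colon P\to[0,1]^m$ that is linear on each simplex of~$\Delta$ and extends $f$ must agree with $\eta$ on each $T\in\Delta$, because two affine maps on a simplex that agree on its vertices must coincide. I do not anticipate a genuine obstacle: the whole argument reduces to applying Lemma~\ref{Lem-LinearMap} on each simplex followed by a routine simplex-by-simplex gluing; the only point requiring a brief check is the containment $\eta(P)\subseteq[0,1]^m$, which is immediate from the convexity of the cube.
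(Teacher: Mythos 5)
Your proposal is correct and follows essentially the same route as the paper: apply Lemma~\ref{Lem-LinearMap} simplex by simplex, glue the pieces using the uniqueness of an affine map determined by its values on the vertices of a simplex, and observe that the result is a $\Zed$-map linear on each simplex of $\Delta$. Your Steps 2--4 merely spell out details (agreement on common faces, containment of the image in $[0,1]^m$, uniqueness) that the paper's proof leaves implicit.
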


\begin{proof} By  Lemma \ref{Lem-LinearMap}, for each
$S\in\Delta$ there is a linear map with integer coefficients
$\eta_{S}\colon S\rightarrow[0,1]^{m}$ such that $ \eta_S(v)=f(v).$
The uniqueness of each $\eta_S$ ensures $\eta=
\bigcup\{\eta_S\mid S\in\Delta\}$ is well defined. Since $\eta$
coincides with $\eta_S$ over every simplex $S\in T$,
it is the desired $\Zed$-map.  \end{proof}

\smallskip The following result states that
the property of being a
$\Zed$-retract of some cube  is invariant under
$\Zed$-homeomorphisms:

\begin{lemma}
    \label{Lem-Zhomeo-Retrac}
    Let $\eta\colon[0,1]^{n}\rightarrow P$ be a
    $\Zed$-retraction onto $P$,
    and $\theta\colon P\rightarrow Q\subseteq[0,1]^{m}$  a
    $\Zed$-homeomorphism of $P$ onto $Q$. Then
    $Q$ is a $\Zed$-retract of $[0,1]^{m}$.
\end{lemma}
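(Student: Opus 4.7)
The strategy is to define $\sigma:=\theta\circ\eta\circ\rho$ for a suitable $\mathbb{Z}$-map $\rho\colon[0,1]^m\to[0,1]^n$ extending $\theta^{-1}\colon Q\to P$, and to show that $\sigma$ is a $\mathbb{Z}$-retraction of $[0,1]^m$ onto $Q$. Given such a $\rho$, a standard pullback of triangulations turns the composition $\sigma$ into a $\mathbb{Z}$-map; its image lies in $\theta(P)=Q$, and for any $q\in Q$ we have $\rho(q)=\theta^{-1}(q)\in P$, so $\eta(\rho(q))=\theta^{-1}(q)$ since $\eta$ fixes $P$ pointwise, and $\sigma(q)=\theta(\theta^{-1}(q))=q$. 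Thus $\sigma\circ\sigma=\sigma$ and $\sigma([0,1]^m)=Q$, so the proof reduces entirely to extending $\theta^{-1}$ to a $\mathbb{Z}$-map on the whole ambient cube.

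To build $\rho$ I would first fix, by invoking Lemma \ref{lemma:existence} together with an initial refinement adapted to the finitely many linear pieces of $\theta^{-1}$, a regular triangulation $\Delta$ of $Q$ on each simplex of which $\theta^{-1}$ agrees with a single integer-coefficient linear map. The key geometric step is then to extend $\Delta$ to a regular triangulation $\Delta'$ of $[0,1]^m$ having $\Delta$ as a full subcomplex: one starts from any rational triangulation of $[0,1]^m$ containing $\Delta$ as a subcomplex (a standard fact of polyhedral topology), then repairs regularity by iterated Farey blow-ups applied only to non-regular cells, in the spirit of the regular-fan desingularization of \cite{ewa}. Since every simplex of $\Delta$ is already regular, no blow-up needs to touch $|\Delta|$, so $\Delta$ survives as a subcomplex of the desingularized $\Delta'$.

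With $\Delta'$ in hand, set $f(v):=\theta^{-1}(v)$ for each vertex $v$ of $\Delta$ and $f(v):=(0,\ldots,0)$ for every other vertex of $\Delta'$. Condition (\ref{equation:denominators}) applied to the $\mathbb{Z}$-map $\theta^{-1}$ gives $\den(f(v))\mid\den(v)$ on vertices of $\Delta$, while $\den(0,\ldots,0)=1$ trivially divides the remaining ones. Lemma \ref{lemma-ExtensionsToZmorphisms} then delivers a $\mathbb{Z}$-map $\rho\colon[0,1]^m\to[0,1]^n$, linear on each simplex of $\Delta'$, that agrees with $\theta^{-1}$ on every vertex of $\Delta$; since linear extensions on each simplex of $\Delta$ are unique, $\rho|_Q=\theta^{-1}$, as required. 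I expect the main obstacle to be precisely the regular extension $\Delta\subseteq\Delta'$, which bundles the subcomplex-extension theorem for rational triangulations with a desingularization that avoids $|\Delta|$; after that step is secured, the rest is a routine verification via Lemma \ref{lemma-ExtensionsToZmorphisms} and the composition property of $\mathbb{Z}$-maps.
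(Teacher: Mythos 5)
Your overall strategy is the same as the paper's: both proofs realize the desired retraction as the composite $\theta\circ\eta\circ\rho$, where $\rho\colon[0,1]^m\to[0,1]^n$ is produced by Lemma \ref{lemma-ExtensionsToZmorphisms} from the vertex assignment sending each vertex in $Q$ to its $\theta^{-1}$-image and every other vertex to the origin, and the verification that this composite is an idempotent $\Zed$-map onto $Q$ is identical. The only divergence is the construction of the ambient regular triangulation of $[0,1]^m$, and here your version is both stronger than necessary and not fully justified.

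You require a regular triangulation $\Delta'$ of $[0,1]^m$ containing your regular triangulation $\Delta$ of $Q$ as an \emph{untouched} subcomplex, and you support this with the assertion that the desingularizing blow-ups ``need not touch $|\Delta|$''. That assertion is true --- it is the affine counterpart of the toric resolution theorem which refines a fan into a regular one without subdividing its already-regular cones --- but it is genuinely stronger than what the desingularization results invoked in this paper (\cite[Theorem 1.2]{mun88}, or the procedures in \cite{ewa}) state: to make it precise one must argue that blow-up points can always be chosen in the relative interiors of non-regular simplexes, that such points never lie in $|\Delta|$ because $\Delta$ is a subcomplex all of whose simplexes are regular, and that the resulting procedure still terminates in a regular triangulation. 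As written, this is exactly the ``main obstacle'' you flag, and it is left as an appeal to the ``spirit'' of \cite{ewa}. The paper avoids the obstacle altogether by demanding less: it starts from a merely \emph{rational} triangulation $\mathcal K$ of $Q$ linearizing $\theta^{-1}$, extends it to a rational triangulation $\nabla$ of $[0,1]^m$ via the affine counterpart of \cite[III, 2.8]{ewa}, and then desingularizes all of $\nabla$ with no constraint whatsoever. The key observation you are missing is that preservation of the subcomplex is irrelevant: every simplex of the final regular triangulation that lies in $Q$ is contained in some simplex of $\mathcal K$, so $\theta^{-1}$ is still linear on it, and two linear maps agreeing on the vertices of a simplex agree on the whole simplex; hence the extension $\rho$ automatically satisfies $\rho\restrict Q=\theta^{-1}$ even though the triangulation of $Q$ got subdivided. (The same remark makes your preliminary step --- upgrading $\mathcal K$ to a regular $\Delta$ by ``an initial refinement'' of the triangulation from Lemma \ref{lemma:existence}, which as phrased would in any case break regularity --- unnecessary.) So your proof is correct modulo the relative desingularization claim, but that claim needs a real proof or citation, and the paper's weaker requirement shows it can be dispensed with entirely.
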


\begin{proof}  We first prove the following

    \medskip
    \noindent {\it Claim.}  There is a  regular
    triangulation $\Delta$ of $[0,1]^m$ such that  the set
    $\Delta_{Q}=\{T\in \Delta\mid T\subseteq Q\}$ is a triangulation
    of $Q$,
    and $\theta^{-1}$ is linear over each
simplex of $\Delta_{Q}$.

\medskip
As a matter of fact, let $\mathcal K$ be a rational  triangulation of
$Q$ such that
the $\mathbb Z$-map
$\theta^{-1}$ is linear over each
simplex  of $\mathcal K$.
Then   the affine counterpart of
\cite[III, 2.8]{ewa}
provides  a rational triangulation
$\nabla$  of $[0,1]^m$ such that
$\mathcal K \subseteq \nabla$.
The desingularization
procedure of \cite[Theorem 1.2]{mun88} now yields
a regular subdivision
$\Delta$ of $\nabla$ having the desired properties
to settle our claim.

\bigskip

    Let $o$ denote the origin of $\mathbb R^n$.
By   Lemma \ref{lemma-ExtensionsToZmorphisms} we have a uniquely determined
    $\Zed$-map $\mu\colon[0,1]^m\rightarrow[0,1]^n$ satisfying
    $$ \mu(v)=
    \left\{ \begin{tabular}{ll} $\theta^{-1}(v)$ & if $v\in Q $\\ $o$ & if
    $v\not\in Q $ \end{tabular} \right.
    $$
    for each  vertex $v$ of
    $\Delta$.  By definition, $\mu\restrict Q=\theta^{-1}$, whence
    $\mu(Q)=P$.  From $\eta\restrict P=Id_{P}$ it follows that
    $\theta\circ\eta\circ\mu\restrict Q=Id_{Q} $ and $
    \theta\circ\eta\circ\mu([0,1]^{m})=\theta(P)=Q. $ In conclusion,
    the map
    $\theta\circ\eta\circ\mu\colon[0,1]^{m}\rightarrow Q$ is a
    $\Zed$-retraction onto $Q$.
\end{proof}

\begin{lemma}
    \label{lemma:existMultiple}
   Let $\conv(v,w)\subseteq\cube$ be a regular
   $1$-simplex such that $a=\den(v)$ and $b=\den(w)$ are coprime.
   Then for each integer $m>0$ there is a rational point
   $z\in\conv(v,w)$ such that $m$ is a divisor of $\den(z)$.
\end{lemma}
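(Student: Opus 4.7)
The plan is to transfer the problem to $\mathbb Z^{n+1}$ via homogeneous correspondents and then settle a short arithmetic claim. Regularity of $\conv(v,w)$ means $\{\widetilde v,\widetilde w\}$ extends to a $\mathbb Z$-basis of $\mathbb Z^{n+1}$, so the integer points of the two-dimensional cone $\mathbb R_{\geq 0}\widetilde v+\mathbb R_{\geq 0}\widetilde w$ are exactly the nonnegative integer combinations $\lambda\widetilde v+\mu\widetilde w$ with $\lambda,\mu\in\mathbb Z_{\geq 0}$. Because primitivity of vectors in $\mathbb Z^{n+1}$ is invariant under $GL_{n+1}(\mathbb Z)$-changes of basis, such a combination is primitive if and only if $\gcd(\lambda,\mu)=1$.

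Next I would invoke the standard bijection between rational points $z\in\conv(v,w)$ and primitive integer points in this cone: $z\mapsto\widetilde z=\den(z)(z,1)$, with inverse obtained by intersecting the ray through $\lambda\widetilde v+\mu\widetilde w$ with the hyperplane $y_{n+1}=1$. Under this bijection $\den(z)$ is the last coordinate of $\widetilde z$, namely $\lambda a+\mu b$. The lemma thus reduces to the purely arithmetic claim: given coprime positive integers $a,b$ and any $m\geq 1$, there exist coprime positive integers $\lambda,\mu$ with $m\mid\lambda a+\mu b$.

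For the arithmetic step I would set $\lambda=b$, so that $\lambda a+\mu b=b(a+\mu)$; writing $d=\gcd(b,m)$ and $m=dm'$ with $\gcd(b/d,m')=1$, divisibility by $m$ is equivalent to $\mu\equiv-a\pmod{m'}$. I then impose that $\mu$ be coprime to $b$. For each prime $p\mid b$ that also divides $m'$, the congruence $\mu\equiv-a\pmod p$ already yields $p\nmid\mu$, since $\gcd(a,b)=1$ forbids $p\mid a$; for each prime $p\mid b$ not dividing $m'$, the Chinese remainder theorem lets me prescribe $\mu\equiv 1\pmod p$ independently of the condition mod $m'$. A large positive representative of the resulting congruence class yields the desired pair, and the point $z$ with $\widetilde z=b\widetilde v+\mu\widetilde w$ satisfies $\den(z)=b(a+\mu)$, a multiple of $m$.

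The main obstacle, as I see it, is the clean identification in the second paragraph: verifying that every rational point of $\conv(v,w)$ is hit and that primitivity of $\lambda\widetilde v+\mu\widetilde w$ in $\mathbb Z^{n+1}$ really coincides with $\gcd(\lambda,\mu)=1$. Both rely essentially on the regularity hypothesis, which supplies the $\mathbb Z$-basis extension of $\{\widetilde v,\widetilde w\}$. Once this reduction is in place, the remaining Chinese remainder manipulation is routine.
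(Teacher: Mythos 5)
Your proof is correct, but it takes a genuinely different route from the paper's. You work entirely in the homogeneous lattice picture: regularity gives the parametrization $z\mapsto\widetilde z=\lambda\widetilde v+\mu\widetilde w$ with $\lambda,\mu\in\mathbb Z_{\geq 0}$ coprime and $\den(z)=\lambda a+\mu b$, and you then solve the arithmetic problem ($m\mid\lambda a+\mu b$ with $\gcd(\lambda,\mu)=1$) explicitly, taking $\lambda=b$ and choosing $\mu$ by the Chinese remainder theorem; your case analysis on the primes dividing $b$ checks out. The paper instead uses B\'ezout to get integers $p,q$ with $qa-pb=1$, $0\leq p<a$, $0<q\leq b$, so that $[p/a,\,q/b]\subseteq[0,1]$ is a regular $1$-simplex whose endpoints have denominators exactly $a$ and $b$; Lemma \ref{lemma-ExtensionsToZmorphisms} then yields a $\Zed$-homeomorphism $\eta$ of $[p/a,q/b]$ onto $\conv(v,w)$, and since $\Zed$-homeomorphisms preserve denominators (apply (\ref{equation:denominators}) to both $\eta$ and $\eta^{-1}$), the lemma reduces to the trivial density, in an interval, of rationals whose denominator is a multiple of $m$. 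The paper's argument buys brevity by reusing the $\Zed$-map machinery already in place; yours is more self-contained and constructive, exhibiting the witness $\widetilde z=b\widetilde v+\mu\widetilde w$ directly, at the cost of the congruence bookkeeping. One simplifying remark: the ``main obstacle'' you flag---surjectivity of the parametrization, i.e.\ that \emph{every} rational point of $\conv(v,w)$ arises from a coprime pair---is not needed for this existence statement. You only need the easy direction: for coprime $\lambda,\mu\geq 0$, the vector $\lambda\widetilde v+\mu\widetilde w$ is primitive (its coordinates in the extended basis are $(\lambda,\mu,0,\ldots,0)$, and primitivity is invariant under unimodular coordinate changes), it lies on the ray of $(z,1)$ for the convex combination $z=\frac{\lambda a v+\mu b w}{\lambda a+\mu b}\in\conv(v,w)$, and the unique primitive integer vector on that ray is $\widetilde z$; hence $\den(z)=\lambda a+\mu b$, which is all your construction requires.
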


\begin{proof}
  By hypothesis, there exist integers $p,q$ satisfying
  \begin{itemize}
     \item[(a)] $qa-pb=1$,
     \item[(b)] $0\leq p<a$ and $0< q\leq b$.
  \end{itemize}
  By (a), the two vectors  $(p,a)$ and
  $(q,b)$ form a basis of
  $\Zed^2$.  Stated otherwise,
  $[p/a,q/b]$ is a regular
  1-simplex.  By  (b),  $[p/a,q/b]\subseteq [0,1]$.
  Again by (a), $p$ and $a$ are coprime,
  whence $\den(p/a)=a=\den(v)$. Similarly,
  $\den(q/b)=\den(w)$.
  Lemma \ref{lemma-ExtensionsToZmorphisms}
  now yields a $\Zed$-homeomorphism $\eta$ of  $[p/a,q/b]$
  onto $\conv(v,w)$. Let $s\in[p/a,q/b]$ be a rational
  point such that $m$ is a divisor of $\den(s)$.
  A trivial density argument shows the existence
  of $s$.
By (\ref{equation:denominators}),  $m$ is a
  divisor of $\den(\eta(s))=\den(s)$
  and
  $\eta(s)$  is the desired rational point of $ \conv(v,w)$.
\end{proof}

\begin{theorem}
    \label{theorem:about}
If  the   polyhedron
    $P$ is a $\mathbb Z$-retract of $\cube$ then
\begin{itemize}
\item[(i)]
$P$ is contractible,

\item[(ii)] $P$ contains a vertex of $\,\,\cube$, and

\item[(iii)]
$P$ has a strongly regular triangulation  (Definition
\ref{definition:strongly}).
\end{itemize}
    \end{theorem}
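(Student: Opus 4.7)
Fix a $\Zed$-retraction $\sigma\colon\cube\to P$. Conditions (i) and (ii) should be easy. For (i), $\sigma$ is a continuous idempotent with image $P$, so $P$ is a topological retract of the contractible space $\cube$, hence contractible. For (ii), any vertex $v\in\{0,1\}^{n}$ of $\cube$ has $\den(v)=1$, so (\ref{equation:denominators}) forces $\den(\sigma(v))=1$; thus $\sigma(v)$ is a vertex of $\cube$ lying in $P$.

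The substance lies in (iii). My strategy is to pick any regular triangulation $\Delta$ of $P$ (which exists by Lemma \ref{lemma:existence}) and show that $\Delta$ itself is already strongly regular; by Lemma \ref{Lem-BlowupPreserGCD} this in fact exceeds what (iii) requires. Fix a maximal simplex $M=\conv(v_{0},\ldots,v_{k})\in\Delta$ and set $d=\gcd(\den(v_{0}),\ldots,\den(v_{k}))$; the goal is $d=1$. The proof will have two steps: a purely lattice-theoretic fact about $M$ alone, and a density argument that invokes $\sigma$.

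The lattice step asserts that every rational point $w\in M$ has $\den(w)$ divisible by $d$. Since $M$ is regular, the vectors $\widetilde{v_{0}},\ldots,\widetilde{v_{k}}$ extend to a $\Zed$-basis of $\Zed^{n+1}$, so the integer vector $\widetilde w$ writes uniquely as $\widetilde w=\sum_{i}\mu_{i}\widetilde{v_{i}}$; because $w\in M$, the coefficients $\mu_{i}$ are non-negative integers, and reading off the last coordinate gives $\den(w)=\sum_{i}\mu_{i}\den(v_{i})$, divisible by $d$.

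For the density step, the maximality of $M$ in $\Delta$ makes $\mathrm{relint}(M)$ open in $P$, so I fix an open neighborhood $U\subseteq\mathrm{relint}(M)$ of some chosen point. Then $\sigma^{-1}(U)$ is a non-empty open subset of $\cube$ (it contains $U$, as $\sigma$ restricts to the identity on $P$), so it contains an open box $B\subseteq\mathbb{R}^{n}$. By density, for each sufficiently large prime $q$ there is a rational point $u\in B$ with $\den(u)=q$; choosing $q\nmid d$ makes $\gcd(\den(u),d)=1$. Now $\sigma(u)\in U\subseteq M$, so the lattice step gives $d\mid\den(\sigma(u))$, while (\ref{equation:denominators}) gives $\den(\sigma(u))\mid\den(u)$; together $d\mid\den(u)$, and hence $d\mid\gcd(\den(u),d)=1$, forcing $d=1$. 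I expect the lattice step to be the main subtlety: it is where regularity of $M$ is genuinely used (to upgrade rational coefficients to integers) and where the arithmetic content of $d$ enters the argument. The density step, once the stage is set, is a routine combination of the continuity of $\sigma$, the divisibility (\ref{equation:denominators}), and the availability of rationals of prescribed prime denominator in an open box.
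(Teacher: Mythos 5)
Your proof is correct, and your arguments for (i) and (ii) coincide with the paper's. For (iii) your overall skeleton is also the paper's: fix a regular triangulation, take a maximal simplex $M$ and set $d=\gcd$ of its vertex denominators, use maximality of $M$ to produce an open subset of $P$ lying inside $M$, pull it back under the retraction $\sigma$, find there a rational point of large prime denominator, and combine (\ref{equation:denominators}) with a divisibility fact to force $d=1$. Where you genuinely depart from the paper is in how you establish that divisibility fact, namely that $d$ divides $\den(x)$ for every rational point $x\in M$. The paper gets this by invoking its Proposition \ref{proposition:cauchy}: the point $\eta(w)$ is reached from $M$ and its faces by a finite sequence of Farey blow-ups, and one checks inductively that $d$ divides the denominator of every Farey mediant so produced. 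You instead argue directly from regularity: since $\widetilde{v_0},\ldots,\widetilde{v_k}$ extend to a $\Zed$-basis of $\Zed^{n+1}$, the integer vector $\widetilde{x}$ of a rational $x\in M$ has non-negative \emph{integer} coordinates $\mu_i$ in that basis (the complementary coordinates vanishing), and the last coordinate reads $\den(x)=\sum_i\mu_i\den(v_i)$, which $d$ divides. This is precisely the fact the paper itself cites from \cite[V, 1.11]{ewa} in the proofs of Lemma \ref{lemma:bigden} and Proposition \ref{proposition:cauchy}, so your route is sound; it short-circuits the Farey-blow-up machinery and makes the arithmetic step self-contained. What the paper's route buys is economy within its own toolkit (Proposition \ref{proposition:cauchy} is needed elsewhere anyway, e.g.\ in Lemma \ref{Lem:empty-m-ahedron} and Theorem \ref{theorem:lim}); what yours buys is a cleaner and more elementary statement---the denominator of any rational point of a regular simplex is a non-negative integer combination of the vertex denominators---proved in a few lines, together with a slightly tidier endgame ($d\mid\den(\sigma(u))\mid q$ with $q$ a prime not dividing $d$, versus the paper's case split $\den(\eta(w))\in\{1,p\}$).
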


    \begin{proof}  The proof of
  (i) is a routine exercise in algebraic topology,
  showing that $\cube$ is contractible,
  and a retract of a contractible space
  is contractible.

Concerning
  (ii), let $\eta\colon \cube\to P$ be a
  $\mathbb Z$-retraction onto $P$.
  By
  (\ref{equation:denominators}),  $\eta$ must
  send every vertex of $\cube$  into some
  vertex of $\cube.$

To prove (iii), let $\Delta$ be a regular triangulation of $P$, as
given by   Lemma \ref{lemma:existence}. Let
the $r$-simplex  $T =\conv(v_0,\ldots,v_r)$
be  maximal in $\Delta$.  Let us
write $d=\gcd(\den(v_0),\ldots,\den(v_r))$, with the intent of proving
$d=1$.  Let $z$ be a rational point lying in the relative interior of
$T$, say for definiteness $z=$ the Farey mediant of $T$.  Since $T$ is
maximal there is an open set $U\subseteq \cube$ such that $z\in U$ and
$U\cap P\subseteq T.$ Let $\eta\colon \cube\to P$ be a $\mathbb
Z$-retraction onto $P$.  Then $\eta^{-1}(U)$ is an open set.  Let $w $
be a rational point in $\eta^{-1}(U)$
whose denominator is a prime $p >    d.$
Since $\eta(w)$ lies in the regular simplex $T$, by
Proposition \ref{proposition:cauchy} $\eta(w)$ can be obtained via a
finite sequence of Farey blow-ups starting from the regular complex
given by $T$ and its faces.  One immediately verifies that $d$ is a
common divisor of the denominators of all Farey mediants thus
obtained.  In particular, $d$ is a divisor of $\den(\eta(w))$.  Since
$p$ is prime, from (\ref{equation:denominators}) it follows that
$\den(\eta(w))\in \{1,p\}$.  Since $p>d$, $d=1$.  We have proved that
$\Delta$ is strongly regular.
\end{proof}

\begin{remark}
    \label{remark:trebis}

    By   Lemmas
  \ref{lemma:existence}
    and \ref{Lem-BlowupPreserGCD},
Condition (iii)  above is equivalent
to
\begin{itemize}
    \item[(iii')] {\it Every regular triangulation
    of $P$ is strongly regular.}
    \end{itemize}
    \end{remark}

\medskip

 Condition (i)  has the following
equivalent reformulations  (for definitions
see the references given in the proof):

\begin{proposition}
    \label{proposition:various}
    For $P\subseteq \cube$ a rational
polyhedron, the following conditions are
equivalent:

\begin{itemize}
\item[$(\alpha)$]  $P$ is contractible.

\item[$(\beta)$]  $P$ is $n$-connected,
 i.e., the homotopy group
 $\pi_{i}(P)$ is trivial for each $i=0,\ldots,n$.

\item[$(\gamma)$]  $P$ is a deformation retract of
$\cube$.

\item[$(\delta)$]
$P$ is a retract of
$\cube$.

\item[$(\epsilon)$]   $P$ is an absolute retract for the class
  of metric spaces.
\end{itemize}
    \end{proposition}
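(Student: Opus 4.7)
The plan is to establish the cycle $(\gamma) \Rightarrow (\delta) \Rightarrow (\alpha) \Rightarrow (\gamma)$ and then hang $(\beta)$ and $(\epsilon)$ onto $(\alpha)$ as side equivalences. Throughout I will use that some triangulation of $\cube$ restricts to a triangulation of $P$ (by the affine counterpart of \cite[III, 2.8]{ewa} invoked in the proof of Lemma \ref{Lem-Zhomeo-Retrac}), so that $(\cube, P)$ is a CW pair and the inclusion $P \hookrightarrow \cube$ is a cofibration.

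Four of the implications are essentially formal. I would dispatch $(\gamma) \Rightarrow (\delta)$ and $(\alpha) \Rightarrow (\beta)$ immediately from the definitions (contractible spaces have trivial homotopy groups), and $(\epsilon) \Rightarrow (\delta)$ from the fact that $P$ sits as a closed subset of the metric space $\cube$. For $(\delta) \Rightarrow (\alpha)$, given a retraction $r \colon \cube \to P$ with inclusion $i$ and a contraction $H \colon \cube \times [0,1] \to \cube$, the composite $r \circ H \circ (i \times \mathrm{id})$ contracts $P$ to a point.

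The substantive work lies in three implications. For $(\beta) \Rightarrow (\alpha)$, I would use that $\dim P \leq n$ and build a contracting homotopy of $P$ skeleton by skeleton using the triangulation above; the obstruction at stage $k \leq n$ lives in $\pi_{k}(P) = 0$, so the contraction extends at each step, and since $\dim P \leq n$ the induction terminates. For $(\alpha) \Rightarrow (\gamma)$, the contractibility of both $P$ and $\cube$ makes the cofibration $P \hookrightarrow \cube$ a weak homotopy equivalence between CW complexes; Whitehead's theorem then promotes it to a genuine homotopy equivalence, and a cofibration that is a homotopy equivalence is automatically a strong deformation retract. For $(\alpha) \Leftrightarrow (\epsilon)$, the direction $(\epsilon) \Rightarrow (\alpha)$ reduces to $(\delta) \Rightarrow (\alpha)$ already handled; conversely, every compact polyhedron is a metric ANR by Borsuk--Dugundji theory, and a contractible ANR is an AR, giving $(\alpha) \Rightarrow (\epsilon)$.

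The hard part will be the implication $(\alpha) \Rightarrow (\gamma)$, which depends on the standard but nontrivial algebraic-topological fact that a cofibration between CW complexes which is also a weak homotopy equivalence must be a strong deformation retract. Everything else is routine once the triangulation of $\cube$ extending one of $P$ is in hand.
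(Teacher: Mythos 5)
Your proposal is correct and follows essentially the same route as the paper: the paper's proof is a chain of citations to the same standard facts you invoke---Whitehead's theorem for $(\alpha)\Rightarrow(\gamma)$, ``a retract of a contractible space is contractible'' for $(\delta)\Rightarrow(\alpha)$, the dimension-$\leq n$ CW argument for $(\beta)\Rightarrow(\alpha)$, and ANR/AR theory (Dugundji) for the equivalence with $(\epsilon)$. Your minor reorganizations (routing $(\epsilon)\Rightarrow(\delta)$ directly from the definition of absolute retract, and sketching the skeletal extension argument rather than citing it) are sound but do not constitute a different approach.
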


\begin{proof}
        $(\alpha)\Rightarrow(\beta)$, \cite[p.405]{spa}.

	$(\beta)\Rightarrow(\alpha)$,
	\cite[p.359]{hat}.

	     $(\alpha)\Rightarrow(\gamma)$
	     is a consequence of Whitehead theorem,
	     \cite[346]{hat}.

	     $(\gamma)\Rightarrow(\delta)$, trivial.

 $(\delta)\Rightarrow(\alpha)$, because a retract of a contractible
 space  (like  $\cube$)
  is contractible.

    $(\alpha)\Leftrightarrow(\epsilon)$,
    \cite[15.2]{dug} together with \cite[p.522]{hat}.

\end{proof}

\section{A converse of Theorem \ref{theorem:about}}

In    Theorem \ref{theorem:collapsible} below we will prove that
Conditions (ii) and (iii) of   Theorem \ref{theorem:about},
together with a stronger form of Condition (i), known as
collapsibility \cite[p.97]{ewa}, \cite[6.6]{sta},
are also sufficient for a polyhedron
$P\subseteq\cube$ to be a
$\Zed$-retract of $\cube$.
The necessary  notation and terminology
are as follows:

An $m$-simplex $T$ of a simplicial complex
$\nabla$ in $[0,1]^n$ is said to have a {\it free face}
$F$ if $F$ is a {\it facet}
(=maximal proper face)
of $T$, but is a face of no other
$m$-simplex of $\nabla$.
It follows that
$T$ is a maximal simplex of $\nabla$,
and the
removal from $\nabla$ of both $T$ and $F$
results in the subcomplex $\nabla'=
\nabla\setminus\{T,F\}$ of $\nabla$.
The transition  from $\nabla$ to
$\nabla'$ is called an {\it elementary collapse}
in \cite[III,7.2]{ewa}
(``elementary contraction'' in \cite[p.247]{whi}).
If a simplicial complex $\Delta$ can be obtained
from $\nabla$ by a sequence of elementary collapses
we say that $\nabla$  {\it collapses to}  $\Delta$.
We say that $\nabla$ is {\it collapsible} if it
collapses to (the simplicial
complex consisting of) one of its vertices.

\bigskip
Given a rational polyhedron
$P\subseteq\cube$ and  a point
$a\in\cube$, following tradition
we let
\begin{equation}
\label{equation:join}
aP=\bigcup\{\conv(a,x)\colon x\in P\},
\end{equation}
and we say that
$aP$ is the {\it join} of $a$ and $P$.
If $P=\emptyset$   we let
$aP=a$.
Further, for
 any simplex  $S$   we use the notation
\begin{equation}
    \label{equation:dot}
\dot{S}=\bigcup\{F\subseteq S\mid F \text{ is a facet of } S\}
\,\,\,\mbox{ and }\,\,\,
a\dot{S} = \bigcup\{aT\mid T \mbox{ a facet of $S$}\}.
\end{equation}
Finally, we denote by $o$ the origin, and by
\begin{equation}
    \label{equation:basis}
e_1,\ldots,e_n
\end{equation}
the standard basis vectors
in $\mathbb R^n$.

\begin{lemma}
\label{Lem:empty-m-ahedron} Let $m_1,\ldots,m_n$ be coprime integers
$\geq 1$ and $s\in \{2,\ldots,n\}$.  Let
$$M=\conv(e_1/m_1,\ldots,e_n/m_n),\,\,\,
F=\conv(e_1/m_1,\ldots,e_{s-1}/m_{s-1}),\,\,\,p=e_{s}/m_{s}.$$ Then
there is a $\Zed$-retraction   of $M\cup o(pF)$ onto $M\cup
o(p\dot{F})$.  \end{lemma}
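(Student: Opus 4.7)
The plan is to build the $\Zed$-retraction $\eta\colon M\cup o(pF)\to M\cup o(p\dot{F})$ by producing a regular refinement $\Delta^{*}$ of an initial regular triangulation of $M\cup o(pF)$ (Lemma~\ref{lemma:existence}) and prescribing $\eta$ on its vertices so as to satisfy the hypotheses of Lemma~\ref{lemma-ExtensionsToZmorphisms}. Geometrically, $o(pF)$ is a regular $s$-simplex whose facets other than the ``bottom'' facet $oF$ lie already in $M\cup o(p\dot{F})$: the facet $pF$ lies in $M$, and each facet $opT_{j}$ ($j=1,\ldots,s-1$) lies in $o(p\dot{F})$. Thus $\eta$ must collapse the interior of $oF$ and of $o(pF)$ onto the horn $pF\cup\bigcup_{j}opT_{j}$ while fixing $M$ pointwise. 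The mechanism is to choose an interior rational point $q$ of $oF$ with $m_{s}\mid\den(q)$, arrange that $q$ becomes a vertex of $\Delta^{*}$, and send $q$ to $p$ while leaving every other vertex of $\Delta^{*}$ fixed.

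The cleanest instance of this strategy occurs when one can take $q$ to be the Farey mediant of $oF$ itself. Its denominator is $1+m_{1}+\cdots+m_{s-1}$, and a single Farey blow-up at $q$ subdivides $oF$ into $\{q*F,q*oT_{1},\ldots,q*oT_{s-1}\}$, so that the top-dimensional simplices of $\Delta^{*}$ contained in $o(pF)$ take the form $p*q*G$ with $G$ a facet of $oF$. Under the prescription $\eta(q)=p$, identity elsewhere, the linear extension sends $p*q*G$ to $p*G$, which equals $pF\subseteq M$ when $G=F$ and $opT_{j}\subseteq o(p\dot{F})$ when $G=oT_{j}$; the divisibility requirement of Lemma~\ref{Lem-LinearMap} collapses to $m_{s}\mid 1+\sum_{j=1}^{s-1}m_{j}$. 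When this congruence is not satisfied, one locates a suitable interior $q$ by applying Lemma~\ref{lemma:existMultiple} to the regular edge $\conv(o,e_{1}/m_{1})\subseteq oF$ (whose vertex denominators $1$ and $m_{1}$ are coprime) to obtain a rational point with denominator divisible by $m_{s}$; for $s=2$ this point is already in the interior of $oF$, while for $s\geq 3$ iterated Farey mediants with $e_{2}/m_{2},\ldots,e_{s-1}/m_{s-1}$ yield an interior $q$ still satisfying $m_{s}\mid\den(q)$. Proposition~\ref{proposition:cauchy} then provides a finite sequence of Farey blow-ups that introduces $q$ as a vertex of a regular refinement $\Delta^{*}$.

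Verification that the linear extension of $\eta$ maps each top-dimensional simplex $G\subseteq o(pF)$ of $\Delta^{*}$ into $M\cup o(p\dot{F})$ proceeds as follows. Every vertex of $G$ other than $p$ lies in $oF$, whose affine hull has dimension $s-1$; hence $G$ must contain $p$ among its vertices, and we may write $G=\conv(\{p\}\cup V)$ with $V\subseteq oF$ and $|V|=s$. Since $\Delta^{*}$ strictly refines $o(pF)$ around $q$, the set $V$ cannot equal the entire original vertex set $\{o,e_{1}/m_{1},\ldots,e_{s-1}/m_{s-1}\}$ of $oF$; and provided the refinement is arranged so that the star of $q$ in $oF$ is the cone $q*\partial(oF)$, the remaining vertices of $G$ after $\eta$ is applied all lie in a common facet of $oF$, placing $\eta(G)$ in $pF$ or in some $opT_{j}$. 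Idempotency is automatic because $\eta$ fixes every vertex of $\Delta^{*}$ contained in the target.

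The principal obstacle is precisely the requirement that the star of $q$ (and of any other interior vertex of $oF$ introduced by the Farey blow-up sequence) have this cone structure, so that the linear extension of $\eta$ stays inside the target on every simplex. A single Farey blow-up at the Farey mediant of $oF$ achieves this automatically but is available only under the congruence $m_{s}\mid 1+\sum_{j=1}^{s-1}m_{j}$. In the general case one must combine Lemma~\ref{lemma:weakoda} and Proposition~\ref{proposition:cauchy} to navigate between regular triangulations so that, after a suitable sequence of Farey blow-ups and blow-downs, the desired cone structure at $q$ can be recovered while the denominator control provided by Lemma~\ref{lemma:existMultiple} is preserved. This delicate interplay between the arithmetic condition $m_{s}\mid\den(q)$ and the geometric cone condition on the star of $q$ is the technical core of the argument.
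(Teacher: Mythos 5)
Your argument is complete only in the special case $m_s \mid 1+m_1+\cdots+m_{s-1}$, where the point sent to $p$ can be taken to be the Farey mediant of $oF$ and a single Farey blow-up suffices; that case you verify correctly. In the general case there are two concrete failures. First, the arithmetic step is wrong: if $q_1\in\conv(o,e_1/m_1)$ satisfies $m_s\mid\den(q_1)$, the Farey mediant of $q_1$ and $e_2/m_2$ has denominator $\den(q_1)+m_2$, so taking iterated mediants with $e_2/m_2,\ldots,e_{s-1}/m_{s-1}$ produces a point of denominator $\den(q_1)+m_2+\cdots+m_{s-1}$, which in general is \emph{not} divisible by $m_s$ (take $m_1=2$, $m_2=3$, $m_3=5$, $s=3$: the point $\frac{2}{5}e_1$ has denominator $5$, but its mediant with $e_2/3$ has denominator $8$). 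Divisibility by $m_s$ is destroyed, not preserved, by this step. Second, and decisively, the prescription ``$q\mapsto p$, identity elsewhere'' is unavailable: to make a non-mediant interior point $q$ into a vertex of a regular triangulation you must invoke Proposition~\ref{proposition:cauchy}, and its blow-up sequence creates further vertices in the relative interior of $oF$. These lie outside the target, since $M\cup o(p\dot{F})$ meets $oF$ exactly in the union of the facets of $oF$ (namely $F$ and the simplexes $oT$, $T$ a facet of $F$); hence a retraction cannot fix them, and they must be mapped into the target subject to the divisibility constraints of Lemma~\ref{Lem-LinearMap}. You acknowledge this as the need for a cone-shaped star at $q$ and defer it as ``the technical core,'' but that core is exactly what is missing: for the cone over the \emph{unsubdivided} boundary of $oF$, the converse of the Farey blow-up property recalled in Section~\ref{section:segunda} forces $q$ to be the Farey mediant of $oF$, i.e., the case already done; for a subdivided boundary you give no construction, and an appeal to ``navigation'' via Lemma~\ref{lemma:weakoda} is not one.

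The paper's proof resolves precisely this point by giving up on fixing the spurious vertices. It runs a descending path of iterated Farey mediants $t_1,\ldots,t_{k+1}$ (of $oF$, then of $t_1F$, $t_2F$, and so on), finds $p^*$ with $m_s\mid\den(p^*)$ on the regular edge $\conv(t_k,t_{k+1})$ by Lemma~\ref{lemma:existMultiple} (the two denominators there differ by $m_1+\cdots+m_{s-1}$ and are automatically coprime), makes $p^*$ a vertex by Proposition~\ref{proposition:cauchy}, and then maps the extra vertices away: those between $o$ and $p^*$ go to $o$, while $t_{k+1}$ and the vertices between $p^*$ and $t_{k+1}$ go to points $x_w\in M$ with $\den(x_w)\mid\den(w)$. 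The existence of these $x_w$ is supplied by Lemma~\ref{lemma:bigden} applied to $M$, using that all denominators along that part of the path exceed a threshold $k$ fixed in advance --- and this is exactly where the hypothesis that $m_1,\ldots,m_n$ are coprime enters. Your proposal never uses that hypothesis, which is a reliable symptom that the general case has not been reached.
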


\begin{proof}
First of all,   both simplexes  $M$ and
$o(pF)$ are regular.
In the light of  Lemma \ref{lemma:bigden},
let us fix
an integer
$k \geq 1$ such that for every integer $l \geq k$ there exists a point
in $M$ whose denominator is a divisor of  $l$.

Let   the regular triangulation  $\Phi$  of $oF$ consist
 of  $oF$ together with its faces.
Let  $t_1$ be the Farey mediant
of $oF$.
Proceeding inductively, for each
$i=1,\ldots,k$ let $t_{i+1}$
be the Farey mediant of $t_{i}F$. Let $\Psi=
\Phi_{(t_1,\ldots,t_{k+1})}$.
By construction,
\begin{equation}
    \label{equation:inequalities}
\den(t_{k+1})=1+(k+1)\sum_{i=1}^{s-1} m_i >
\den(t_k)= 1+ k\sum_{i=1}^{s-1} m_i >  k.
\end{equation}
Since the   1-simplex $\conv(t_k,t_{k+1})$
satisfies the hypotheses
of  Lemma \ref{lemma:existMultiple},
there is a rational point
$p^*\in\conv(t_k,t_{k+1})$ such that
 $m_s$ is divisor of
$\den(p^*)$, in symbols,
\begin{equation}
    \label{equation:useful}
   m_s = \den(p) \,\,\, | \,\,\,\den(p^*).
    \end{equation}

Let $\Lambda$ be
the regular triangulation consisting of the  1-simplex
$\conv(t_k,t_{k+1})$ together with its faces.
By   Proposition
\ref{proposition:cauchy} there is a
finite sequence of Farey blow-ups
$\Lambda, \Lambda_{(w_1)},
\Lambda_{(w_1,w_2)},\ldots,\Lambda_{(w_1,\ldots,w_u)}
$
such that
$p^*$ is a vertex of some simplex of
$\Lambda_{(w_1,\ldots,w_u)}$.
The sequence of  consecutive Farey blow-ups of $\Psi$
 at  $w_1,\ldots,w_u$ yields the
regular triangulation
$\Delta=\Psi_{(w_1,\ldots,w_u)}$  of the
polyhedron  $oF.$
Let $w$ be an arbitrary point in the set
$V=\{w_1,\ldots,w_u, t_{k+1}\}$.
By  (\ref{equation:inequalities})
we can write
$
    \den(w) > \den(t_k) >k,
$
whence  our initial stipulation
about  the integer $k$ yields
a rational point  $x_w\in M$ such that $\den(x_w)$
is a divisor of $\den(w)$, in symbols,
\begin{equation}
    \label{equation:rosso}
    \forall w\in V\,\,\exists x_w\in M
    \mbox{ such that } \den(x_w) \,|\, \den(w).
 \end{equation}

To conclude the proof,
let   the regular triangulation
$\nabla$   of $M\cup o(pF)$ consist of all faces of $M$
together with the set of simplexes
$
\{pS\mid S\in \Delta\}\cup\Delta.
$
The vertices of $\nabla$  are
$
e_1/m_1,\ldots,e_{s-1}/m_{s-1},\,\, e_s/m_s=p,\,\,
e_{s+1}/m_{s+1},\ldots,e_{n}/m_{n}\in M,
$
together with
$
o,t_1,\ldots,t_{k+1}\in oF \text{ and } w_1,\ldots,w_u
\in\conv(t_k,t_{k+1}).
$
Let $\xi$ be the unique continuous  map
of $M\cup o(pF)$ into $\cube$ which is linear over
each simplex of $\nabla$, and for each  vertex $v$ of $\nabla$
satisfies

\begin{equation}
    \label{equation:casistica}
  \xi(v)=\left\{\begin{tabular}{ll}
    $p$& if $v=p^*$  \\
  $o$& if $v\in \{t_{1},\ldots,t_k\}$ or
  $v\in\conv(t_k,p^*)\setminus\{p^*\}$\\

  $x_v$& if $v\in\conv(p^*,t_{k+1})\setminus\{p^*\}\,$ \\
  $v$& if $v\in M\cup \{o\}.$\\
  \end{tabular} \right.
\end{equation}
By    Lemma
\ref{lemma-ExtensionsToZmorphisms},
recalling (\ref{equation:useful}) and (\ref{equation:rosso}),
$\xi$ is a  $\Zed$-map.
For some (uniquely determined)
permutation $\beta$  of $\{1,\ldots,u\}$
let the list
$
 t_{k}, w_{\beta(1)},\dots,
 w_{\beta(u)}, t_{k+1}
$
display the vertices of $\nabla$
lying on $\conv(t_k,t_{k+1})$
in the order of increasing distance from
$t_k$.  The maximal simplexes of
$\nabla$
are $M,\,\,\,p(t_{k+1}F)$ and $p(\conv(a,b,S))$,  where $S$
ranges over the set $\dot{F}$
of  facets of $F$,
and, $a,b\in oF$ are consecutive vertices in the list
$$
o, t_1, t_2,\ldots,t_k, w_{\beta(1)},\dots,
 w_{\beta(u)}, t_{k+1}.
$$
For any such maximal
simplex $T\in \nabla$,  a
tedious but straightforward
perusal of (\ref{equation:casistica})
shows that
$\xi(T)$ is contained in the rational
polyhedron $M\cup o(p\dot{F})$,
whence
$$\xi(M\cup o(pF))
 \subseteq M\cup o(p\dot{F}).$$
Further, for every
vertex
$v\in\{o,e_1/m_1,\ldots,e_{n}/m_{n}\}$
the last line of (\ref{equation:casistica})
shows that
$\xi(v)=v$.  It follows that
  $\xi(w)=w$ for each
$w\in M\cup o(p\dot{F})$.

In conclusion,  $\xi$ is a
$\mathbb Z$-retraction of
$M\cup o(pF)$ onto $ M\cup o(p\dot{F})$.
 \end{proof}

\bigskip
For the proof of Theorem \ref{theorem:collapsible} below,
given any regular complex $\Lambda$ in $\cube$
with vertex set  ${\mathscr V}=\{v_{1},\ldots,v_{u}\},$
we will construct a $\mathbb Z$-homeomorphic copy
$\Lambda^\perp$  of $\Lambda$  in $[0,1]^u$ in two steps
as follows:

\smallskip
For any $(k-1)$-simplex $T=\conv(v_{i(1)},\ldots,v_{i(k)})$
in $\Lambda$,
 recalling the notation
(\ref{equation:basis})
we first set
\begin{equation}
    \label{equation:perpsimplex}
    T^\perp = \conv(\frac{e_{i(1)}}{\den(v_{i(1)})},\ldots,
\frac{e_{i(k)}}{\den(v_{i(k)})})\subseteq
    [0,1]^u.
\end{equation}
Next we define
\begin{equation}
    \label{equation:perpcomplex}
    {\Lambda}^{\perp} = \{T^\perp\mid T\in \Lambda\}.
 \end{equation}
It follows that
${\Lambda}^{\perp}$  is a regular
complex, whose  symbiotic
relation with $\Lambda$ is given by the following

\begin{lemma}
    \label{Lem-HomeoCanonicRealization}
For every rational polyhedron
$P\subseteq
\cube$ and regular triangulation
$\Lambda$ of $P$ there is a
$\Zed$-homeomorphism $\eta$
of $P$ onto $|{\Lambda}^{\perp}|$
which is linear on each simplex of
$\Lambda$.
  \end{lemma}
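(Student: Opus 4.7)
The plan is to define $\eta$ on the vertex set of $\Lambda$ by $\eta(v_i) = e_i/\den(v_i)$ and then invoke Lemma \ref{lemma-ExtensionsToZmorphisms} to extend it to a $\Zed$-map linear on each simplex of $\Lambda$. Since the nonzero coordinate of $e_i/\den(v_i)$ has $\den(v_i)$ as denominator, we have $\den(\eta(v_i)) = \den(v_i)$, so the divisibility hypothesis of Lemma \ref{lemma-ExtensionsToZmorphisms} is met (trivially, as equality). The image of $\eta$ is $|\Lambda^\perp|$ by construction, recalling (\ref{equation:perpsimplex})--(\ref{equation:perpcomplex}).

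Before applying Lemma \ref{lemma-ExtensionsToZmorphisms} in the reverse direction, I must verify that $\Lambda^\perp$ is itself a regular triangulation of $|\Lambda^\perp|$. For any $T=\conv(v_{i(1)},\ldots,v_{i(k)}) \in \Lambda$, the homogeneous correspondents of the vertices of $T^\perp$ are the vectors $(e_{i(j)},\den(v_{i(j)})) \in \Zed^{u+1}$ for $j=1,\ldots,k$. To extend these to a basis of $\Zed^{u+1}$, adjoin $(e_l,0)$ for every $l \in \{1,\ldots,u\}\setminus \{i(1),\ldots,i(k)\}$ together with the vector $(0,\ldots,0,1)$. Arranging these $u+1$ vectors as columns of a matrix and using the last column to clear the $\den(v_{i(j)})$ entries in row $u+1$ of the columns coming from $T^\perp$, one obtains (up to permutation) the identity matrix; hence the determinant is $\pm 1$, proving that $T^\perp$ is regular. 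That $\Lambda^\perp$ is a simplicial complex—i.e., $T_1^\perp\cap T_2^\perp=(T_1\cap T_2)^\perp$—follows because the vertices $e_1/\den(v_1),\ldots,e_u/\den(v_u)$ lie on pairwise distinct coordinate axes of $\mathbb R^u$ and are therefore affinely independent, so any subset of them spans a face of the simplex they jointly span.

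Having established that $\Lambda^\perp$ is a regular triangulation with vertex set $\{e_i/\den(v_i) \mid 1 \le i \le u\}$, I apply Lemma \ref{lemma-ExtensionsToZmorphisms} again to obtain a $\Zed$-map $\eta'\colon |\Lambda^\perp|\to \cube$ linear on each simplex of $\Lambda^\perp$ and sending $e_i/\den(v_i) \mapsto v_i$; the divisibility hypothesis $\den(v_i)\mid \den(e_i/\den(v_i))$ is again trivial. By construction, $\eta'\circ\eta$ fixes every vertex of $\Lambda$ and is linear on each simplex of $\Lambda$, so it agrees with the identity on $P$; symmetrically, $\eta\circ\eta'$ is the identity on $|\Lambda^\perp|$. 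Hence $\eta$ is a $\Zed$-homeomorphism with the claimed linearity property.

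The most delicate step is verifying regularity of each $T^\perp$; it is not a formal consequence of the regularity of $T$ but requires the explicit basis exhibited above. Everything else consists of routine appeals to Lemma \ref{lemma-ExtensionsToZmorphisms} and the observation that a piecewise-linear map on a triangulation is determined by its values on vertices.
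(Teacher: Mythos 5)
Your proposal is correct and takes essentially the same route as the paper: the paper's proof also consists of extending the vertex bijection $v_j\mapsto e_j/\den(v_j)$ and its inverse to mutually inverse $\Zed$-maps via Lemma \ref{lemma-ExtensionsToZmorphisms}. Your explicit check that each $T^\perp$ is regular (and that $\Lambda^{\perp}$ is a simplicial complex) merely fills in a detail that the paper asserts without proof in the sentence preceding the lemma.
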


\begin{proof}  Letting, as above,
    $\{v_1,\ldots,v_u\}$  be the vertices of
    $\Lambda,$  the proof immediately follows
    from (\ref{equation:perpsimplex})-(\ref{equation:perpcomplex}),
    upon noting that  the map $$f\colon v_j\mapsto
    \frac{e_{j}}{\den(v_{j})},
    \,\,\,(j=1,\ldots,u)$$
    as well as its inverse $f^{-1}$,
    satisfy the hypotheses of
  Lemma
\ref{lemma-ExtensionsToZmorphisms}.
\end{proof}

\begin{theorem}
    \label{theorem:collapsible}
Let $P\subseteq [0,1]^{n}$ be a   polyhedron. Suppose
\begin{itemize}
\item[(i')] $P$ has a
  collapsible  triangulation $\nabla$;
\item[(ii)] $P$ contains a vertex $v$ of $\cube$;
\item[(iii)] $P$ has a strongly regular
triangulation  $\Delta$.
\end{itemize}
Then  $P$ is a $\Zed$-retract of $[0,1]^{n}$.
\end{theorem}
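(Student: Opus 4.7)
The plan is to reduce, via the $\perp$-transform of Lemma \ref{Lem-HomeoCanonicRealization}, to the canonical embedding $|\Lambda^\perp|\subseteq[0,1]^u$, and then to construct the desired $\mathbb Z$-retraction by peeling off a surrounding triangulation of $[0,1]^u$ onto $|\Lambda^\perp|$ through a sequence of elementary collapses, each realized as a $\mathbb Z$-retraction via Lemma \ref{Lem:empty-m-ahedron}. To this end I would first consolidate the three hypotheses into one regular triangulation $\Lambda$ of $P$ that is simultaneously strongly regular, collapsible, and contains the cube-vertex $v$ among its vertices: starting from the collapsible $\nabla$ of (i'), refine it into a regular triangulation by Farey blow-ups (by Whitehead's subdivision theorem, stellar subdivisions preserve collapsibility, so the refinement remains collapsible); by Lemma \ref{Lem-BlowupPreserGCD} and Remark \ref{remark:trebis}, the existence of $\Delta$ forces $\Lambda$ to be strongly regular; and if $v$ is not already among the vertices, insert it via the blow-ups of Proposition \ref{proposition:cauchy}.

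Applying Lemma \ref{Lem-HomeoCanonicRealization} then yields a $\mathbb Z$-homeomorphism $P\cong|\Lambda^\perp|\subseteq[0,1]^u$, where $u$ is the number of vertices of $\Lambda$. By Lemma \ref{Lem-Zhomeo-Retrac} it suffices to produce a $\mathbb Z$-retraction of $[0,1]^u$ onto $|\Lambda^\perp|$. Since $\den(v)=1$, the image of $v$ under the $\perp$-transform is a genuine vertex $e_{i_v}$ of $[0,1]^u$; meanwhile the origin $o\in[0,1]^u$ lies outside $|\Lambda^\perp|$ and will serve as the external apex demanded by Lemma \ref{Lem:empty-m-ahedron}.

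The retraction is then built by composing local $\mathbb Z$-retraction moves. Along the lines of the claim in the proof of Lemma \ref{Lem-Zhomeo-Retrac}, I would choose a regular triangulation $\Gamma$ of $[0,1]^u$ refining $\Lambda^\perp$, and then construct a finite sequence of elementary collapses from $\Gamma$ down to $\Lambda^\perp$, followed by the given collapse of $\Lambda^\perp$ to $\{e_{i_v}\}$. At each elementary collapse, the canonical positions $e_j/\den(v_j)$ of the vertices together with the coprimality of denominators around each maximal simplex (strong regularity) put the local configuration in exactly the form $M\cup o(pF)$ required by Lemma \ref{Lem:empty-m-ahedron}, which thereby furnishes the $\mathbb Z$-retraction realizing that collapse. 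Composing these $\mathbb Z$-retractions in order---the composition of $\mathbb Z$-retractions is again a $\mathbb Z$-retraction, as noted before Theorem \ref{Thm_Projective_Retraction}---produces a $\mathbb Z$-retraction $[0,1]^u\to|\Lambda^\perp|$, which by Lemma \ref{Lem-Zhomeo-Retrac} transfers back to the desired $\mathbb Z$-retraction of $[0,1]^n$ onto $P$.

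The main obstacle is the construction of $\Gamma$ and its collapsing sequence: one must arrange the ambient triangulation so that the cube collapses to $|\Lambda^\perp|$ through local moves each of which exactly matches the rather rigid configuration of Lemma \ref{Lem:empty-m-ahedron} around the external apex $o$. Producing such a $\Gamma$---possibly after further Farey blow-ups introducing auxiliary vertices to witness the $M\cup o(pF)$ shape at each step---and verifying that the composite map is globally a well-defined $\mathbb Z$-map restricting to the identity on $|\Lambda^\perp|$, is where the main technical weight of the argument will lie.
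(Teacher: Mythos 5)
Your opening moves coincide with the paper's: consolidate the hypotheses into one triangulation that is simultaneously collapsible and strongly regular (the paper's $\bar{\nabla}$, obtained by desingularizing $\nabla$ and invoking Whitehead's theorem plus Lemma \ref{Lem-BlowupPreserGCD}), pass to the canonical copy $Q=|\bar{\nabla}^{\perp}|\subseteq[0,1]^u$ via Lemma \ref{Lem-HomeoCanonicRealization}, and reduce via Lemma \ref{Lem-Zhomeo-Retrac} to showing that $Q$ is a $\Zed$-retract of $[0,1]^u$. From there, however, your architecture has a genuine gap. You propose to take an ambient regular triangulation $\Gamma$ of $[0,1]^u$ containing $\Lambda^{\perp}$ as a subcomplex and to collapse $\Gamma$ down to $\Lambda^{\perp}$. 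No hypothesis gives you such a collapse: (i') concerns a triangulation of $P$ alone, and collapsibility of a subcomplex does \emph{not} imply that the ambient cube collapses onto it --- that would make $\Lambda^{\perp}$ a \emph{spine} of the cube, which is an independent and much stronger PL statement (spines need not even be collapsible, as Bing's house shows, and there is no quotable theorem producing an ambient collapse onto a given collapsible subcomplex, let alone one by \emph{regular} simplexes meeting the boundary of the cube). You flag this as ``the main technical weight,'' but it is an unproven assertion at least as hard as the theorem itself. Note also that in your plan hypothesis (i') does no real work --- the burden is carried entirely by the unestablished collapsibility of $\Gamma$ onto $\Lambda^{\perp}$ --- and that if you additionally realized ``the given collapse of $\Lambda^{\perp}$ to $\{e_{i_v}\}$'' as $\Zed$-retractions, you would retract the cube onto a single point, overshooting $Q$.

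The second problem is that, even granting such a $\Gamma$ and a collapsing sequence, Lemma \ref{Lem:empty-m-ahedron} cannot realize its elementary collapses: that lemma handles only the rigid configuration $M\cup o(pF)\to M\cup o(p\dot{F})$, where $M$ is a simplex in canonical position (vertices $e_i/m_i$, coprime denominators) and the simplex being removed is a cone \emph{from the origin} over a face of $M$; the simplexes of a generic ambient $\Gamma$ are neither cones from $o$ nor canonically placed, and the paper has no other device for turning an elementary collapse into a $\Zed$-map. The paper's proof is engineered precisely so that every needed local move has this shape: it first retracts $[0,1]^u$ onto the \emph{star-shaped} join $oQ$ (quoting the proof of \cite[Theorem 1.4]{cabmun}); it then uses the collapse sequence $\Delta_0,\ldots,\Delta_m$ of $Q$'s own triangulation --- this is where (i') enters --- to shrink only the cone part, through $\Zed$-retractions $Q\cup o|\Delta_{i-1}|\to Q\cup o|\Delta_i|$ which keep $Q$ pointwise fixed and collapse $oE_i$ onto $o(p_i\dot{F_i})$, each one literally an instance of Lemma \ref{Lem:empty-m-ahedron} applied inside a maximal simplex $M_i$ of $\Delta_0$ (strong regularity supplying the coprimality). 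A final $\Zed$-retraction squashing $\conv(o,e_1)$ to $e_1$ finishes. The missing idea in your proposal is thus the cone-from-the-origin construction: one collapses $P$'s triangulation \emph{inside the cone} $oQ$, rather than collapsing an ambient triangulation of the cube onto $Q$.
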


\begin{proof}
    By (iii), $P$ is a rational polyhedron.
 By \cite{bey77rat},  it is no
 loss of generality to assume that
 $\nabla$  is rational.
The desingularization process of \cite[1.2]{mun88}, yields a regular
subdivision $\bar{\nabla}$ of $\nabla$ via finitely many blow-ups.  By
 (i') and
\cite[Theorem 4]{whi}, $\bar{\nabla}$ is
collapsible.  By
 Lemma \ref{Lem-BlowupPreserGCD},
$\bar{\nabla}$ is strongly regular, since so is $\Delta$
by (iii).

Let $v_{1},\ldots,v_{u}$  be the vertices of $\bar{\nabla}$.
Defining the regular
  complex $\bar{\nabla}^{\perp}$
  as in (\ref{equation:perpcomplex}),
  we have a rational polyhedron
  $Q=|\bar{\nabla}^{\perp}|\subseteq [0,1]^u$
  which, by
  Lemma \ref{Lem-HomeoCanonicRealization},
is a $\mathbb Z$-homeomorphic copy
of $P$.
Thus   in view
of Lemma \ref{Lem-Zhomeo-Retrac}
it is sufficient to prove that
\begin{equation}
    \label{equation:claim}
Q \mbox{ is a $\Zed$-retract of } [0,1]^u.
\end{equation}
To this purpose, let us first note that
$\bar{\nabla}^{\perp}$ inherits from
$\bar{\nabla}$ both properties of
strong regularity
and collapsibility.
The vertices of $\bar{\nabla}^{\perp}$ are
$v_{1}^{\perp},\ldots,v_{u}^{\perp}$, where
$v_{i}^{\perp}=\frac{e_i}{\den(v_i)},$ as given by
(\ref{equation:perpsimplex}).  By (ii),
it is no loss of generality to assume $v=v_1$, whence
$\den(v_1)=1$
and $v_{1}^{\perp}=e_1$.

Following Whitehead
\cite[p.248]{whi}, let
\begin{equation}
    \label{equation:firstseq}
\bar{\nabla}^{\perp}=\Delta_0,
\,\,\Delta_1,\ldots,\Delta_m
\end{equation}
be a sequence of
regular
triangulations
such that for each $i=1,\ldots m$, $\Delta_{i}$
is obtained
from $\Delta_{i-1}$ via an elementary collapse, and $\Delta_m$
only consists of the 0-simplex $\{e_1\}$.
For each
$i=1,\ldots, m,\,$ we then have
uniquely determined
simplexes $p_i, F_i,E_i \in \Delta_{i-1}$ such that
 \begin{itemize}
 \item[(a)]  $p_i$ is a vertex of $E_i$ not in $F_i$, and $E_i=p_iF_i$;
 \item[(b)] $F_i$ is a proper face of no other simplex of $\Delta_{i-1}$
 but $E_i$;
 \item[(c)] $\Delta_i=\Delta_{i-1}\setminus \{E_i,F_i\}.$
 \end{itemize}

Letting  $o$ denote the origin in $\mathbb R^u$,
the join
$oQ$ is  {\it star-shaped} at $o$,
\cite[p.38]{hat}, in the sense that
for every $z \in oQ$ the
  set $\conv(o,z)$ is contained
in $oQ$.
The   proof of \cite[Theorem 1.4]{cabmun}
  shows that  $oQ$ is a
$\Zed$-retract
of $[0,1]^u$.
We will construct a sequence
 \begin{equation}
     \label{equation:sequence}
      oQ=|\Lambda_0|\overset{\eta_1}{\longrightarrow}
      |\Lambda_1|\overset{\eta_2}{\longrightarrow}
 \cdots\overset{\eta_m}{\longrightarrow}|\Lambda_m|=
 Q\cup\conv(o,e_1)
     \end{equation}
     of
 $\Zed$-retractions of rational
 polyhedra in $[0,1]^u$, together with
 a $\mathbb Z$-retraction  $\phi$  of
 $Q\cup\conv(o,e_1)$ onto $Q$.
To this purpose, for each  $j=0,\ldots,m\,\,$ we set
\begin{equation}
    \label{equation:detail}
    \Lambda_j = \{o\}\cup \Delta_0
    \cup \{oT\mid T \in \Delta_j\}.
    \end{equation}
Every
 $\Lambda_{j}$ is a regular complex, and
\begin{equation}
    \label{equation:uno}
   \textstyle |\Lambda_j|= |\Delta_0|
    \cup \bigcup \{oT\mid T \in \Delta_j\}=Q
    \cup o\bigcup \{T\mid T \in \Delta_j\}=Q
\cup o|\Delta_j|.
\end{equation}
 Recalling (c) we immediately have
 \begin{equation}
     \label{equation:lambdas1}
     \Lambda_i = \Lambda_{i-1} \setminus
     \{oE_i, oF_i\}\,\,\,\,\, \mbox{ and }\,\,\,\,\,
|\Lambda_{i-1}|=oE_i\cup |\Lambda_i|,
     \end{equation}
     for each $i=1,\ldots,m.\,\,\,$
From (a) we obtain
 \begin{equation}
     \label{equation:lambdas2}
  \textstyle       oE_i\cap|\Lambda_i|=
  \bigcup\{oF\mid F \mbox{ is a facet of }
  E_i\mbox{ and }F\neq F_i\}
  =o(p_i\dot{F_i}),
\end{equation}
for all $i=1,\ldots,m$.
As the reader will recall from
(\ref{equation:dot}), $\dot{F_i}$ denotes the pointset union of facets
of $F_i$.  By (a), $p_i\dot{F_i}$ is the pointset union of
the facets of  $E_i$ different from $F_i$.  We now choose a maximal
simplex $M_i$ of $\Delta_0=\bar{\nabla}^{\perp}$ such that
$E_i\subseteq M_i$.  Since $\Delta_0$ is strongly regular, the
denominators of the vertices of $M_i$ are coprime.

\medskip An application of   Lemma \ref{Lem:empty-m-ahedron}
yields a $\Zed$-retraction $\xi_i$ of $M_i\cup
oE_i =M_i \cup o(p_i F_i)$ onto $M_i\cup
o(p_i\dot{F_i})$.  By (\ref{equation:lambdas1})
and (\ref{equation:lambdas2}), the
map $\eta_i\colon |\Lambda_{i-1}|\rightarrow |\Lambda_i|$
defined by
$$
\eta_i(w)=\left\{\begin{tabular}{ll}
$w$ & if $w\in |\Lambda_i|$\\
$\xi_i(w)$ &  if $w\in oE_i\,$
\end{tabular}
\right.
$$
is   a $\Zed$-retraction of
$|\Lambda_{i-1}|$  onto $|\Lambda_i|$,
as promised in (\ref{equation:sequence}).
The composite
map $\eta=\eta_m\circ\ldots\circ\eta_1\colon oQ
\rightarrow |\Lambda_m|$ is a $\Zed$-retraction
of $oQ$
onto $Q\cup \conv(o,e_1)$.

\medskip
Next  let
 the map $\phi\colon |\Lambda_m|\rightarrow Q$  be defined by
$$
\phi(w)=\left\{\begin{tabular}{ll}
$w$ & if $w\in Q$\\
$e_1$ & if $w\in \conv(o,e_1)$.
\end{tabular}
\right.
$$
Then
$\phi$ is the promised
$\Zed$-retraction of $|\Lambda_m|$
onto $Q$, thus showing that
$Q$ is a $\Zed$-retract of $oQ$.

As already observed,
we have a
$\Zed$-retraction $\mu$ of $[0,1]^u$
onto the star-shaped polyhedron
$oQ$.

Summing up,
the composite map
$\phi\circ\eta\circ\mu$ is a
 $\Zed$-retraction of $[0,1]^u$ onto $Q$,
 as required to settle (\ref{equation:claim}).
 The proof is complete.
\end{proof}

\begin{corollary}
    \label{corollary:tree}
Let $P\subseteq [0,1]^{n}$ be a one-di\-m\-en\-sio\-n\-al
   polyhedron. Then
$P$ is a $\Zed$-retract of $[0,1]^{n}$ iff
$P$ satisfies Conditions (i)-(iii) of Theorem
\ref{theorem:about}. In the present case,
Condition (i) is equivalent to saying that $P$
is connected and simply connected, i.e., $P$ is a tree.
\end{corollary}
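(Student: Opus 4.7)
The plan is to combine the two main theorems proved above. The forward direction is a one-line consequence of Theorem~\ref{theorem:about}, so all the work is in the converse and in the final equivalence statement about trees.

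For the converse, I would apply Theorem~\ref{theorem:collapsible}. The hypotheses (ii) and (iii) are given, so the only thing to check is that Condition (i), contractibility, yields Condition (i'), namely the existence of a collapsible triangulation of $P$. To this end, let $\nabla$ be any regular triangulation of $P$, as provided by Lemma~\ref{lemma:existence}; since $P$ is one-dimensional, $\nabla$ consists of finitely many $0$- and $1$-simplexes. Because $P$ is connected (by contractibility, using Proposition~\ref{proposition:various} with $n=1$, so $\pi_{0}(P)=0$) and $\pi_{1}(P)=0$, the underlying graph of $\nabla$ is a finite connected graph with no cycles, i.e.\ a finite tree. If $\nabla$ has more than one vertex, then (as in any finite tree) $\nabla$ possesses a pendant vertex $v$, that is, a $0$-simplex which is a facet of exactly one $1$-simplex $e$; then $v$ is a free face of $e$, and the elementary collapse removes $\{e,v\}$ and leaves a smaller tree $\nabla'$. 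Iterating, we reduce $\nabla$ to a single vertex, proving it collapsible. Theorem~\ref{theorem:collapsible} then yields that $P$ is a $\Zed$-retract of $\cube$.

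For the final clause, I would argue that in the one-dimensional case the contractibility in (i) is equivalent to $P$ being a tree. One direction is Proposition~\ref{proposition:various}: if $P$ is contractible then $\pi_{0}(P)=\pi_{1}(P)=0$, so $P$ is connected and simply connected. Conversely, a connected simply connected finite one-dimensional polyhedron has no embedded cycle and is therefore (combinatorially) a tree, which is contractible: one may exhibit an explicit contraction by iteratively deformation-retracting a pendant edge onto its non-pendant endpoint, matching precisely the collapsing procedure used above.

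The only potential obstacle is a book-keeping subtlety in the collapsibility step: one must ensure that the triangulation used exhibits the tree structure combinatorially (so that pendant vertices are free faces in the sense of \cite{ewa}). This is automatic for any simplicial triangulation of a one-dimensional polyhedron since $1$-simplexes are maximal and their $0$-dimensional facets are their vertices; no subdivision trick is needed, and the argument is therefore clean.
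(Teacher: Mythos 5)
Your proposal is correct and follows essentially the same route as the paper: necessity via Theorem~\ref{theorem:about}, sufficiency by showing that contractibility of a one-dimensional $P$ forces any (regular) triangulation to be a tree and hence collapsible, so that Theorem~\ref{theorem:collapsible} applies, and the tree/contractibility equivalence via Proposition~\ref{proposition:various} together with the collapse-to-deformation-retract observation. The only difference is one of detail: where the paper simply asserts that all triangulations of a tree are collapsible, you spell out the pendant-vertex (free-face) argument, which is a welcome but inessential elaboration.
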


\begin{proof}  $P$ necessarily is a rational
    polyhedron.  Let $\nabla$ be a regular
    triangulation of $P$, as given by
   Lemma \ref{lemma:existence}.
In the light of
 Theorems \ref{theorem:about} and \ref{theorem:collapsible}
  with Proposition \ref{proposition:various},
we  have only to check that
$P$ is connected and simply connected
iff
$\nabla$ is collapsible.
$(\Rightarrow):$ Then $P$ contains no
simple closed curve, whence $P$ is a tree.
All triangulations of $P$ are
collapsible. $(\Leftarrow):$
Suppose  $\nabla$ collapses to
its vertex $v$. Then $v$ is a deformation
retract of $P$, and $P$ is contractible,
i.e., $P$ is connected and simply connected.
\end{proof}

\section{Finitely generated  projective unital $\ell$-groups}

 We now apply the results of the previous sections to
  finitely generated projective
unital $\ell$-groups.  In contrast to
what Baker and Beynon proved for
$\ell$-groups in \cite{bak} and \cite{bey77},
finitely generated projective
{\it unital} $\ell$-groups
form a very special
subclass of finitely presented unital $\ell$-groups:

\begin{corollary}
    \label{corollary:parafrasi}
    Suppose
  $(G,u)$ is a finitely presented
  unital $\ell$-group, and write
   $(G,u)\cong \McNn\restrict P$
   for some $n=1,2,\ldots$ and some rational polyhedron
   $P\subseteq \cube$ as given by  Proposition
   \ref{proposition:finpres}.
   \begin{itemize}
\item[(I)]  If  $(G,u)$
is projective then $P$ satisfies Conditions
 (i)-(iii)  of   Theorem \ref{theorem:about}.

  \item[(II)]  If $P$ is one-dimensional
 and satisfies Conditions
 (i)-(iii) then $(G,u)$ is projective.

 \item[(III)] More generally,
 if $P$ satisfies Conditions
 (ii)-(iii) and has a collapsible
 triangulation then
 $(G,u)$ is projective.

 \end{itemize}
\end{corollary}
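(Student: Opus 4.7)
The plan is to reduce each of (I), (II), (III) to the geometric characterization of $\mathbb Z$-retracts of $\cube$, using Theorem \ref{Thm_Projective_Retraction} as the bridge between algebra and polyhedral geometry.

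For (II) and (III) the argument is direct. If $P$ is one-dimensional and satisfies conditions (i)--(iii) of Theorem \ref{theorem:about}, Corollary \ref{corollary:tree} immediately yields that $P$ is a $\mathbb Z$-retract of $\cube$; if instead $P$ satisfies (ii)--(iii) and admits a collapsible triangulation, Theorem \ref{theorem:collapsible} provides the same conclusion. In either case, an application of Theorem \ref{Thm_Projective_Retraction} to the given isomorphism $(G,u)\cong \McNn\restrict P$ gives that $(G,u)$ is finitely generated projective.

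For (I), projectivity of $(G,u)$ combined with Theorem \ref{Thm_Projective_Retraction} supplies some $m$ and some $\mathbb Z$-retract $Q\subseteq [0,1]^m$ with $(G,u)\cong \McN_m\restrict Q$; a priori this $Q$ may differ from the given $P$. My strategy is to transfer the retract property from $Q$ to $P$. The key input is the polyhedral duality implicit in Proposition \ref{proposition:finpres}: a unital $\ell$-isomorphism $\McNn\restrict P\cong \McN_m\restrict Q$ corresponds to a $\mathbb Z$-homeomorphism between $P$ and $Q$. With such a $\mathbb Z$-homeomorphism in hand, Lemma \ref{Lem-Zhomeo-Retrac} turns the fact that $Q$ is a $\mathbb Z$-retract of $[0,1]^m$ into the statement that $P$ is a $\mathbb Z$-retract of $\cube$. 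Theorem \ref{theorem:about} applied to $P$ then yields conditions (i)--(iii).

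The main obstacle is the duality step in (I), namely passing from the algebraic isomorphism $\McNn\restrict P\cong \McN_m\restrict Q$ to a $\mathbb Z$-homeomorphism of the underlying polyhedra. This is the contravariant dual of Proposition \ref{proposition:finpres}(a), belonging to the standard polyhedral machinery that governs the functor $P\mapsto \McNn\restrict P$; once invoked, the proof is a short concatenation of the previously established lemmas and theorems, and no further computation is needed.
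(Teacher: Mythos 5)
Your treatment of (II) and (III) coincides with the paper's own proof: both are immediate from Corollary \ref{corollary:tree}, respectively Theorem \ref{theorem:collapsible}, combined with Theorem \ref{Thm_Projective_Retraction}. Your architecture for (I) is also exactly the paper's: obtain from projectivity a $\Zed$-retract $Q\subseteq[0,1]^m$ with $(G,u)\cong\McN_m\restrict Q$, transfer the retract property from $Q$ to $P$ through a $\Zed$-homeomorphism via Lemma \ref{Lem-Zhomeo-Retrac}, then apply Theorem \ref{theorem:about}. However, the step you yourself flag as ``the main obstacle'' and then dispose of by invoking ``the contravariant dual of Proposition \ref{proposition:finpres}(a)'' is a genuine gap. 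No such dual statement exists in the paper: Proposition \ref{proposition:finpres}(a) is a bare existence statement (some polyhedron presents $(G,u)$) and says nothing about how unital $\ell$-isomorphisms of the algebras correspond to $\Zed$-maps of the polyhedra. Nor is this available as standard citable machinery within the paper's framework; proving precisely this claim is the entire substantive content of the paper's proof of (I).

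The missing argument runs as follows. Let $\iota\colon \McNn\restrict P \to \McN_m\restrict Q$ be a unital $\ell$-isomorphism. Each coordinate map $\xi_i\restrict P$ lies in the unit interval of $\McNn\restrict P$, so $\iota(\xi_i\restrict P)=h_i\restrict Q$ with the range of $h_i$ contained in $[0,1]$; hence $\eta=(h_1,\ldots,h_n)$ is a $\Zed$-map from $[0,1]^m$ into $\cube$. By induction on the number of operations in $f$, using the generation statement of Proposition \ref{proposition:free}, one shows $\iota(f\restrict P)=(f\circ\eta)\restrict Q$ for every $f\in\McNn$. Since $\iota$ is an isomorphism, $f\restrict P=0$ iff $f\restrict \eta(Q)=0$, and the separation property \cite[Proposition 4.17]{mun86} then forces $\eta(Q)=P$. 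Interchanging the roles of $\iota$ and $\iota^{-1}$ yields a $\Zed$-map $\mu$ with $\mu(P)=Q$, and a further appeal to \cite[Proposition 4.17]{mun86} shows that $\eta\circ\mu$ and $\mu\circ\eta$ are the identity maps on $P$ and on $Q$, so $P$ and $Q$ are $\Zed$-homeomorphic. Only with this claim actually proved do Lemma \ref{Lem-Zhomeo-Retrac} and Theorem \ref{theorem:about} complete (I) in the way you describe.
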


\begin{proof}
 (I) In the light of   Theorems \ref{theorem:about} and
 \ref{Thm_Projective_Retraction}  we must only prove that $P$ is a
 $\mathbb Z$-retract of $\cube$.  By
  Lemma \ref{Lem-Zhomeo-Retrac},
 it suffices to settle the following

 \medskip
 \noindent {\it Claim:} If  a
 rational polyhedron $Q\subseteq [0,1]^{m}$
satisfies
$\McNn\restrict P \cong \McN_m\restrict Q$
then $P$ and $Q$ are $\mathbb
 Z$-homeomorphic.

 \smallskip
To this purpose, let $\iota\colon \McNn\restrict
P\cong\McN_m\restrict Q$ be
a unital $\ell$-isomorphism. Let
 $\xi_{1},\ldots,\xi_{n}\colon\cube\rightarrow [0,1]$ be the coordinate
 maps.  Each element $\xi_{i}\restrict P\in\McNn\restrict P$
 is sent by $\iota$
 to some element $h_{i}\restrict Q$ of $\McN_m\restrict Q.$ Since
each   $\xi_i$ belongs to the unit interval of  $\McNn\restrict P$,
then  $h_{i}$ belongs to the unit interval of
 $\McN_m\restrict Q,$
 i.e., the range of $h_{i}$ is contained in the unit interval
 $[0,1]$.
Then the map  $\eta\colon[0,1]^{m} \to
 [0,1]^{n}$ defined by
$$\eta(x_1,\ldots,x_m)=(h_{1}(x_1,\ldots,x_m),\ldots,h_{n}(x_1,\ldots,x_m)),
\,\,\,\forall (x_1,\ldots,x_m)\in [0,1]^m,$$
is a $\Zed$-map.
Let  $f$ be an arbitrary function in  $\McNn$.
Arguing by induction on the number
 of operations in $f$ in the light of   Proposition
 \ref{proposition:free}, we get
\begin{equation}
    \label{eq:iotaeta}
 \iota(f\restrict P)=(f\circ \eta)\restrict Q.
\end{equation}
Since $\iota$ is a unital $\ell$-isomorphism,
  $$ f\restrict P=0
 \Leftrightarrow\iota(f\restrict P)=0\Leftrightarrow
 f\circ\eta\restrict Q =0\Leftrightarrow f\restrict \eta(Q)=0.
 $$
  By \cite[Proposition 4.17]{mun86}, $P=\eta(Q)$.
 Interchanging the roles of $\iota$ and $\iota^{-1}$ we obtain a
 $\Zed$-map $\mu\colon \cube\rightarrow [0,1]^m$ such that
 $\iota^{-1}(g\restrict Q)=(g\circ \mu)\restrict P \text{ and }
 \mu(P)=Q.$
 By
 (\ref{eq:iotaeta}),
 $\,\,
 f\restrict P=f\circ (\eta\circ \mu)\restrict P\text{ and }g\restrict
 Q=g\circ(\mu\circ\eta)\restrict Q,
 $ for each $f\in\McNn$ and
 $g\in\McN_m$.  Again by \cite[Proposition 4.17]{mun86},
 the composition
 $\eta\circ\mu$ is the identity map on $P$,
 and $\mu\circ\eta$ is the
 identity map on $Q$.
 Thus  $P$ and $Q$ are
 $\Zed$-homeomorphic, as required to settle our claim
 and also to complete the proof of (I).

  (II) From  Corollary
 \ref{corollary:tree} and Theorem \ref{Thm_Projective_Retraction}.

  (III) From   Theorems \ref{theorem:collapsible} and
 \ref{Thm_Projective_Retraction}.
\end{proof}

\bigskip
Our final result in this paper,
Theorem \ref{theorem:lim} below,  will
give (intrinsic) necessary and sufficient conditions
for $(G,u)$ to be finitely generated projective,
in terms of the spectral properties of
$G$.
To this purpose, we denote by
$$\maxspec({G})$$
the set of maximal
 ideals of $G$,
 equipped with the {\it spectral} topology, \cite[\S
10]{bigkeiwol}, \cite[5.7]{gla}: a basis of closed sets for
$\maxspec(G)$ is given by all sets of the form $ \{\mathfrak
p \in \maxspec(G) \mid a\in \mathfrak p\}, $ where $a$ ranges over all
elements of $G$.

A maximal ideal
$\mathfrak m$ is {\it discrete} if the ordering of
the totally ordered quotient
$(G,u)/\mathfrak m$  is discrete  (non-dense).
In this case, by the Hion-H{\"o}lder theorem \cite[p.45-47]{fuc},
\cite[2.6]{bigkeiwol},
$(G,u)$ is unitally $\ell$-isomorphic to
$(\mathbb Z\frac{1}{n},1)$
for a unique integer $n\geq 1$,
called the {\it rank} of  $\mathfrak m$
and denoted $\rho(\mathfrak m)$.
In case  $\mathfrak m$ is not discrete
we set
\begin{equation}
\label{equation:infinito}
\rho(\mathfrak m)=+\infty\,\,\,\mbox{ and }\,\,\,
\gcd(n,+\infty)=+\infty,  \forall n=1,2,\ldots.
\end{equation}

 \begin{lemma}
    \label{lemma:hoelder}
For every $n=1,2,\ldots$ and nonempty closed set
$X\subseteq [0,1]^{n}$   we have:
    \begin{itemize}

        \item[(a)]  The map $\alpha\colon x\in X\mapsto
        {\mathfrak m}_{x}=\{f\in
        \McNn\restrict X
        \mid f(x)=0\}$ is a homeomorphism
        of $X$ onto
        $\maxspec(\McNn\restrict X)$.
	The inverse map
	sends every
        $\mathfrak m\in \maxspec(\McNn\restrict X)$ to the only
member $x_{\mathfrak m}$ of the set  $\bigcap\{g^{-1}(0)\mid
        g\in \mathfrak m\}$.

\smallskip
    \item[(b)] For every
    $\mathfrak{m}\in \maxspec(\McNn\restrict X)$  there is a unique pair
    $(\iota_{\mathfrak m},R_{\mathfrak m})$, where
    $R_{\mathfrak m}$ is a unital $\ell$-subgroup of
    $(\mathbb R,1)$, and $\iota_{\mathfrak m}$
    is a unital $\ell$-isomorphism of the quotient
    $\maxspec(\McN\restrict X)/\mathfrak m$ onto $R_{\mathfrak m}$.
     For every $x\in X$
and   $f\in  \McNn\restrict X$,
$\,\,\,f(x)=\iota_{{\mathfrak m}_{x}}(f/{\mathfrak m}_{x})$.

\smallskip
\item[(c)]  Suppose  $x\in X$ and  $\mathfrak m=\alpha(x)$.
Then $x$ is rational iff $\mathfrak m$ is discrete.
If  this is the case,  $\rho(\mathfrak m)=
\den(x)$.
        \end{itemize}
\end{lemma}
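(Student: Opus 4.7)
The approach is to identify points of $X$ with the kernels of evaluation homomorphisms. For each $x\in X$, the evaluation $\epsilon_{x}\colon f\mapsto f(x)$ is a unital $\ell$-homomorphism from $\McNn\restrict X$ to $(\mathbb R,1)$, and its image is a unital $\ell$-subgroup of $(\mathbb R,1)$; such a subgroup is simple by Hion--H\"older, so $\mathfrak{m}_{x}=\ker\epsilon_{x}$ is maximal.  For surjectivity of $\alpha$, given a maximal ideal $\mathfrak{m}$, I would consider $Z(\mathfrak{m})=\bigcap_{g\in\mathfrak{m}}g^{-1}(0)\subseteq X$.  If $Z(\mathfrak{m})=\emptyset$, by compactness finitely many $g_{1},\ldots,g_{k}\in\mathfrak{m}$ have no common zero in $X$; then $h=|g_{1}|\vee\cdots\vee|g_{k}|\in\mathfrak{m}$ satisfies $h\geq\varepsilon>0$ on $X$ for some $\varepsilon>0$, whence $nh\geq 1=u$ for $n\geq 1/\varepsilon$, forcing $u\in\mathfrak{m}$ by absorption and contradicting properness.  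Any $x\in Z(\mathfrak{m})$ then satisfies $\mathfrak{m}\subseteq\mathfrak{m}_{x}$, and maximality gives $\mathfrak{m}=\mathfrak{m}_{x}$.

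For injectivity of $\alpha$, if $x\neq y$ pick $i$ with $\xi_{i}(x)<\xi_{i}(y)$ and positive integers $a,b$ with $\xi_{i}(x)<b/a<\xi_{i}(y)$; then $(a\xi_{i}-b)^{+}\in\McNn$ vanishes at $x$ but not at $y$, so $\mathfrak{m}_{x}\neq\mathfrak{m}_{y}$.  The same separation shows $Z(\mathfrak{m}_{x})=\{x\}$, giving the stated formula for $\alpha^{-1}$.  Continuity of $\alpha$ is routine since each basic closed set $\{\mathfrak{p}\mid a\in\mathfrak{p}\}$ pulls back to $\{x\in X\mid a(x)=0\}$, which is closed in $X$ by continuity of $a$; since $X$ is compact and the maximal spectrum of a unital $\ell$-group is compact Hausdorff in the spectral topology (standard, \cite[\S 10]{bigkeiwol}), the continuous bijection $\alpha$ is a homeomorphism.

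Part (b) is a direct application of Hion--H\"older: maximality of $\mathfrak{m}$ makes $(\McNn\restrict X)/\mathfrak{m}$ simple, totally ordered, and archimedean with respect to its induced unit, so there is a unique unital $\ell$-embedding $\iota_{\mathfrak{m}}$ into $(\mathbb R,1)$, whose image we take as $R_{\mathfrak{m}}$.  When $\mathfrak{m}=\mathfrak{m}_{x}$, $\epsilon_{x}$ factors through the quotient as an injective unital $\ell$-homomorphism which by uniqueness equals $\iota_{\mathfrak{m}_{x}}$, yielding $f(x)=\iota_{\mathfrak{m}_{x}}(f/\mathfrak{m}_{x})$.  For (c), I would observe that $R_{\mathfrak{m}_{x}}=\epsilon_{x}(\McNn\restrict X)$ equals the additive subgroup $A_{x}\subseteq\mathbb R$ generated by $1,\xi_{1}(x),\ldots,\xi_{n}(x)$: the inclusion $\supseteq$ is immediate, and $\subseteq$ holds because every McNaughton function is integer-affine on a neighborhood of $x$.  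When $x=(c_{1}/d,\ldots,c_{n}/d)$ with $d=\den(x)$ and $\gcd(c_{1},\ldots,c_{n},d)=1$, the group $A_{x}$ equals $\frac{1}{d}\mathbb Z$, so $\mathfrak{m}_{x}$ is discrete with $\rho(\mathfrak{m}_{x})=d=\den(x)$; if some $\xi_{i}(x)$ is irrational, $A_{x}\supseteq\mathbb Z+\mathbb Z\xi_{i}(x)$ is dense in $\mathbb R$, so $\mathfrak{m}_{x}$ is not discrete.

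The main obstacle is the nonemptiness of $Z(\mathfrak{m})$ in part (a): the argument depends crucially on both the compactness of $X$ and the fact that $1$ is a strong order unit of $\McNn\restrict X$, which together force any element of $\mathfrak{m}$ that is bounded away from zero to absorb the unit.  Everything else reduces to routine bookkeeping around Hion--H\"older and the local integer-affine structure of McNaughton functions.
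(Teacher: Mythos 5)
Your proof is correct and follows essentially the same route as the paper: the paper compresses part (a) into a citation of Yosida's representation theorem together with the fact that $\McNn\restrict X$ separates points, part (b) into the Hion--H\"older theorem, and part (c) into ``follows from (a) and (b)''. Your compactness/zero-set argument for surjectivity, the factorization of the evaluation map through the quotient, and the identification of $R_{\mathfrak m_x}$ with the subgroup of $\mathbb R$ generated by $1,\xi_1(x),\ldots,\xi_n(x)$ are exactly the standard content of those citations, so your proposal is a correct, self-contained unpacking of the paper's proof.
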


\begin{proof} The proof of
    (a) follows from a classical
    result due to Yosida \cite{yos}, because
    the functions in
    $\McNn\restrict X$ separate points,
    \cite[4.17]{mun86}.  (See \cite[8.1]{mun86} for further
    details).
   (b)  is a reformulation of the time-honored
   Hion-H{\"o}lder theorem
 \cite[p.45-47]{fuc},
\cite[2.6]{bigkeiwol}.
 Finally,  (c) follows from (a) and (b).
 \end{proof}

\begin{theorem}
\label{theorem:lim}
Let $(G,u)$ be a unital $\ell$-group.
\begin{enumerate}
    \item
The following  conditions are  necessary for
$(G,u)$ to be finitely generated projective:

\begin{itemize}
    \item[(A)] $(G,u)$ is finitely presented.

    \item[(B)]  For every  discrete maximal ideal
    $\mathfrak m$ of
$G$, and  open neighborhood $N$ of
$\mathfrak m$ in $\maxspec(G)$,
there is $\mathfrak n\in N$ such that
$\gcd(\rho(\mathfrak n),
	\rho(\mathfrak m))=1.
	$

    \item[(C)] $G$ has a maximal
ideal of rank $1$.

    \item[(D)] The topological space
    $\maxspec(G)$ is compact Hausdorff,
    metrizable, finite-dimensional and contractible.

\end{itemize}
\item   If $\maxspec(G)$ is one-dimensional,
the four conditions (A)-(D) are also sufficient
for $(G,u)$ to be finitely generated projective.
Actually, Condition (D) can be replaced
by the requirement that
$\maxspec(G)$ is connected and
simply connected.

\item  More generally,  if $(G,u)$
satisfies Conditions (B)-(C),  and
$(G,u)\cong\McNn\restrict P$
for some rational polyhedron $P$ having
a collapsible  triangulation,
then $(G,u)$ is finitely generated projective.
\end{enumerate}
\end{theorem}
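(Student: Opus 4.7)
The plan is to translate the intrinsic spectral conditions (A)-(D) into the polyhedral conditions (i)-(iii) of Theorem \ref{theorem:about} (together with a collapsibility hypothesis in part (3)) via the homeomorphism $\alpha$ of Lemma \ref{lemma:hoelder}(a), and then invoke Corollary \ref{corollary:parafrasi}. The starting point in each direction is a representation $(G,u)\cong\McNn\restrict P$ for a rational polyhedron $P\subseteq\cube$, coming from Proposition \ref{proposition:finpres}(b) in part (1) and from hypothesis (A) in parts (2)-(3). Under the induced identification $\maxspec(G)\cong P$, Lemma \ref{lemma:hoelder}(c) matches discrete maximal ideals with rational points of $P$ and their ranks with denominators, while non-discrete ideals correspond to irrational points with $\rho=+\infty$ by (\ref{equation:infinito}).

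For part (1), (A) is Proposition \ref{proposition:finpres}(b), and by Corollary \ref{corollary:parafrasi}(I) the polyhedron $P$ satisfies conditions (i)-(iii) of Theorem \ref{theorem:about}. Condition (D) follows at once: $\maxspec(G)\cong P\subseteq\cube$ is automatically compact Hausdorff, metrizable and finite-dimensional, and (i) supplies contractibility. Condition (C) follows from (ii), since vertices of $\cube$ have denominator $1$. For (B), given a rational $x\in P$ and a neighborhood $U$, I would start from a strongly regular triangulation of $P$ and use Proposition \ref{proposition:cauchy} together with further Farey blow-ups inside the star of $x$ (permissible by Lemma \ref{Lem-BlowupPreserGCD}) to produce a strongly regular triangulation in which $x$ is a vertex whose star lies in $U$. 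Any maximal simplex $T=\conv(x,v_1,\ldots,v_k)$ in this star satisfies $\gcd(\den(x),\den(v_1),\ldots,\den(v_k))=1$, so by Lemma \ref{lemma:bigden}, choosing $l$ a sufficiently large prime not dividing $\den(x)$ produces a point $y\in T\subseteq U$ with $\den(y)\in\{1,l\}$, hence $\gcd(\den(y),\den(x))=1$; the ideal $\alpha(y)\in\alpha(U)$ is discrete and provides the required $\mathfrak n$.

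For parts (2) and (3), assume (A) and write $(G,u)\cong\McNn\restrict P$. Condition (C) yields a rational point of $P$ of denominator $1$, which in $\cube$ must be a vertex, so condition (ii) of Theorem \ref{theorem:about} holds. For condition (iii), I would adapt the argument in Theorem \ref{theorem:about}(iii): fix a regular triangulation $\Delta$ of $P$ (Lemma \ref{lemma:existence}), suppose a maximal $T=\conv(v_0,\ldots,v_r)\in\Delta$ had $d=\gcd(\den(v_0),\ldots,\den(v_r))>1$, and pick $x$ in the relative interior of $T$ with an open neighborhood $U$ satisfying $U\cap P\subseteq T$. Then $d\mid\den(x)$, and by (B) some rational $y\in U$ has $\gcd(\den(y),\den(x))=1$. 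Proposition \ref{proposition:cauchy} now furnishes a sequence of Farey blow-ups starting from $T$ and terminating with $y$ as a vertex; since every Farey mediant produced has denominator divisible by $d$, we get $d\mid\den(y)$, contradicting coprimality. With (ii) and (iii) in hand, (D) supplies the contractibility (i), whereupon Corollary \ref{corollary:parafrasi}(II) concludes part (2); the equivalence with ``connected and simply connected'' is exactly Corollary \ref{corollary:tree}. In part (3), the assumed collapsible triangulation feeds directly into Corollary \ref{corollary:parafrasi}(III).

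The main obstacle is the two-way translation between condition (B) and the strong regularity of triangulations. The forward direction combines Proposition \ref{proposition:cauchy}, the preservation Lemma \ref{Lem-BlowupPreserGCD}, and the denominator-realization Lemma \ref{lemma:bigden} to exhibit rational points with coprime denominators inside an arbitrarily small neighborhood of a prescribed rational point. The reverse direction is a Cauchy-Farey argument paralleling Theorem \ref{theorem:about}(iii), whose key ingredient is the fact that Farey mediants propagate gcd divisibility.
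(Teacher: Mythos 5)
Your overall architecture is the paper's own: identify $\maxspec(G)$ with $P$ via the homeomorphism $\alpha$ of Lemma \ref{lemma:hoelder}, match ranks of discrete maximal ideals with denominators of rational points, translate (A)--(D) into Conditions (i)--(iii) of Theorem \ref{theorem:about}, and invoke Corollary \ref{corollary:parafrasi}. Your treatment of (A), (C), (D), your derivation of (ii) from (C) and of (iii) from (B) in parts (2)--(3) (the contradiction via Proposition \ref{proposition:cauchy} and the propagation of $d$ through Farey mediants is exactly the paper's argument, phrased contrapositively), and your use of Corollaries \ref{corollary:tree} and \ref{corollary:parafrasi}(II)--(III) all coincide with the paper's proof.

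The one genuine divergence is the necessity of (B), and there your argument has a gap. The paper gets (B) directly from projectivity: by Theorem \ref{Thm_Projective_Retraction} and the Claim in Corollary \ref{corollary:parafrasi}(I) there is a $\Zed$-retraction $\eta$ of $\cube$ onto $P$; given a rational $x\in P$ and a neighborhood $U$, the open set $\eta^{-1}(U)$ contains rational points $w$ of arbitrarily large prime denominator $p$, and by (\ref{equation:denominators}) the point $\eta(w)\in U\cap P$ has denominator dividing $p$, hence coprime to $\den(x)$ once $p>\den(x)$. You instead derive (B) from strong regularity alone, via Proposition \ref{proposition:cauchy}, Lemma \ref{Lem-BlowupPreserGCD} and Lemma \ref{lemma:bigden}; this is a viable route (and it isolates the fact that (B) is really a consequence of (iii) rather than of the retraction per se), but it hinges on a step you assert without proof and which appears nowhere in the paper: that ``further Farey blow-ups inside the star of $x$'' yield a strongly regular triangulation in which some maximal simplex $T$ containing $x$ satisfies $T\subseteq U$. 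This shrinking claim is true but needs an argument: blowing up at the Farey mediant $m$ of an edge $\conv(x,v)$ gives $|m-x|=\frac{\den(v)}{\den(x)+\den(v)}\,|v-x|$, and iterating along the edges at $x$ produces, after $k$ rounds, endpoints $v_k$ with $|v_k-x|=\frac{d_0}{d_0+k\den(x)}\,|v_0-x|\to 0$ (where $d_0=\den(v_0)$), while each round leaves the star of $x$ spanned by $x$ and the current mediants, so its diameter tends to $0$; regularity is preserved by Farey blow-ups and strong regularity by Lemma \ref{Lem-BlowupPreserGCD}. Either supply this computation or replace your step by the paper's retraction argument; with that repair your proof is complete.
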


\begin{proof}
(1)  Condition (A) holds by
  Proposition \ref{proposition:finpres}.

 To prove Condition  (B), by (A) jointly with   Corollary
  \ref{corollary:parafrasi}(I) we can write
  $(G,u)=\McNn\restrict P$  for some rational
  polyhedron  $P\subseteq \cube$ satisfying
 Conditions (i)-(iii) of  Theorem
  \ref{theorem:about}.
  Using the  homeomorphism
  $\alpha$ of
  Lemma \ref{lemma:hoelder},
  ranks of discrete maximal ideals
  of $G$ coincide with denominators of their
  corresponding  rational points in $P$.
If $P$ is a singleton, then by Condition (ii)
  it coincides with some
  vertex of $\cube$  and we have nothing to prove.
Otherwise, let
  $\Delta$ be a strongly regular triangulation
  of $P$ as given by Condition (iii).
  Let $x$  be a rational point of $P$.
  The proof of Theorem \ref{theorem:about}
  shows that every open neighborhood
 of $x$ contains rational points  $q$  of arbitrarily
  large prime denominator, whence  $\gcd(\den(x),\den(q))=1$,
  and  (B) is proved.

  Using $\alpha$ we see that
  Condition (C) holds, because $P$
  satisfies  Condition (ii) of  Theorem
  \ref{theorem:about}.

To conclude the proof of (1),  we must show that
 $\maxspec(G)$  has
all the properties listed in Condition (D).
In the light of   Lemma \ref{lemma:hoelder},
this is equivalent to checking
that $P$ has all these
  properties. (The invariance
  of contractibility under
  homeomorphisms follows, e.g., from
 Proposition \ref{proposition:various}.)
The first three properties
  are trivially verified.
  $P$ is contractible
  because it satisfies Condition (i)
of   Theorem
  \ref{theorem:about}.

(2) Since $(G,u)$ satisfies Condition (A),  it can be identified
with   $\McNn\restrict P$, for some  $n=1,2,\ldots$
and rational polyhedron $P\subseteq
\cube$.  The homeomorphism $\alpha$ of  Lemma
\ref{lemma:hoelder} again ensures that
ranks of discrete maximal ideals of
$G$ coincide with denominators of their corresponding rational points
of   $P$.  Thus  Condition (C)
is to the effect that  $P$
must contain some vertex of $\cube,$
whence  $P$ satisfies     Condition (ii) of  Theorem
\ref{theorem:about}.

We next prove that
 {\it every}
  regular triangulation $\Delta$
  of $P$ is strongly regular. By
   Remark \ref{remark:trebis}
 this is an equivalent reformulation of
 Condition (iii).  Suppose  $\Delta$  is a  counter-example,
  and let $T$ be a maximal simplex of $\Delta$
  such that the gcd of the denominators of the
  vertices of $T$ is $d>1.$  Pick a rational
  point $q$ in the relative interior of $P$
  and observe that, by
 Proposition \ref{proposition:cauchy},
  $d$ is a divisor of  $\den(q)$.
  By the assumed maximality
  of $T$, for every rational point $q'$
 in a suitably small open neighborhood of  $q$,
  $d$ is a divisor of $\den(q')$.
The maximal ideal  $\alpha(q)$ of $G$
falsifies the assumed Condition (B).
We have shown that $P$ satisfies
  Condition (iii) of Theorem
\ref{theorem:about}.

Further,  $P$ satisfies  Condition (i)
because its homeomorphic copy
$\maxspec(G)$ is contractible,
by Condition (D).

Having  thus shown  that $P$
satisfies Conditions (i)-(iii)
of  Theorem \ref{theorem:about}, an
application of   Corollary
  \ref{corollary:parafrasi}(II)
  proves the first statement in (2).

For the second statement, since
$G$ has an order unit,
$\,\,\maxspec(G)$ is a nonempty compact Hausdorff space,
(for a proof see \cite[10.2.2]{bigkeiwol}, where
the order unit is called ``unit\'e forte'').
The   homeomorphism
$\alpha$ of   Lemma \ref{lemma:hoelder}
ensures that there
is no ambiguity in defining the dimension
of the compact Hausdorff
metrizable space $P$ and of its
homeomorphic copy  $\maxspec(G)$.
 Condition (A)  ensures that
  $\maxspec(G)$ is finite-dimensional and
 metrizable.  Thus
Condition (D) equivalently states that
$\maxspec(G)$ is contractible.
By   Proposition \ref{proposition:various},
this  is in turn equivalent to
stating  that
the one-dimensional
space  $\maxspec(G)$  is connected and simply connected.
This completes the proof of  (2).

(3)  By   Proposition \ref{proposition:finpres},
$(G,u)$ is finitely presented.
Given the map $\alpha$
of  Lemma \ref{lemma:hoelder},    Condition (C)
is equivalent to stating
that $P$ satisfies Condition (ii)
of  Theorem \ref{theorem:about}.
Condition (iii)
now follows from Condition (B) arguing as in (2) above.
An application of   Corollary
  \ref{corollary:parafrasi}(III) concludes the proof.
\end{proof}


  \subsection*{Acknowledgment}
  The authors are grateful to
  Professor Marco Grandis and to Dr.
  Bruno Benedetti for introducing them
  to the main results of algebraic topology
  used in this paper.  We also thank
  Professor Vincenzo
  Marra for drawing our attention to reference
  \cite{bey77rat}.


\end{document}